\theoremstyle{plain}
\newtheorem{theorem}{Theorem}[section] 
\newtheorem{lemma}[theorem]{Lemma}
\newtheorem{proposition}[theorem]{Proposition}
\theoremstyle{definition}
\newtheorem{definition}[theorem]{Definition}
\newtheorem{remark}[theorem]{Remark}
\theoremstyle{remark}
\mathchardef\emptyset="001F
\numberwithin{equation}{section}
\newcommand{\Om}{\Omega}
\newcommand{\R}{\mathbb{R}}
\newcommand{\Rd}{\mathbb{R}^2}
\newcommand{\Rdd}{\mathbb{R}^{2\times 2}}
\newcommand{\Ls}{{L^2}}
\newcommand{\V}{\mathbb V}
\newcommand{\C}{\mathbb C}
\newcommand{\A}{\mathbb A}
\newcommand{\dt}{\textnormal{d}t}
\newcommand{\dtau}{\textnormal{d}\tau}
\newcommand{\dive}{\textnormal{div}}
\newcommand\ee{\end{equation}}
\newcommand\be{\begin{equation}}
\title[Dynamic crack growth in viscoelastic materials with memory]
{Dynamic crack growth in\\ viscoelastic materials with memory}
\author[Federico Cianci]{Federico Cianci}
\address[Federico Cianci]{SISSA, Via Bonomea 265, 34136 Trieste,
Italy}
\email[Federico Cianci]{fcianci@sissa.it}
\begin{document}

\begin{abstract}

In this paper we introduce a model of dynamic crack growth in viscoelastic materials, where the damping term depends on the history of the deformation. The model is based on a dynamic energy dissipation balance and on a maximal dissipation condition. Our main result is an existence theorem in dimension two under some a priori regularity constraints on the cracks.
\end{abstract}

\maketitle

{\small{{\it Keywords:\/} crack growth, evolution problems with memory, elastodynamics, viscoelasticity. }}

{\small{{\it 2020 MSC:\/}  
35Q74
, 74D05
, 74H20
.

}}

\section{Introduction}\label{intro}

We consider the problem of crack growth in a viscoelastic material with memory governed by the system
\begin{equation}\label{intro-mainprobl-nc-infty}
    \ddot{u}(t) - \dive\big{(}(\C+\V) Eu(t)\big{)} + \dive\Big{(} \int^t_{-\infty} \textnormal{e}^{\tau-t}\,\V Eu(\tau) \, \textnormal{d}\tau \Big{)} = f(t),
\end{equation}
where $u$, $Eu$, and $\ddot{u}$, are the displacement, the symmetric part of its gradient, and its second derivative with respect to time, $\C$ and $\V$ are the elasticity and viscosity tensors, while $f$ is the external load. For this model the stress at time $t$ is given by
\begin{equation}\label{stress-orig}
    \sigma(t):=\C Eu(t) + \V Eu(t) - \int^t_{-\infty} \textnormal{e}^{\tau-t}\,\V Eu(\tau) \, \textnormal{d}\tau.
\end{equation}
Moreover, as in \cite{Dafermos, Fab-Gi-Pata} we assume that we know the displacement $u$ on $(-\infty,0]$ and we want to solve \eqref{intro-mainprobl-nc-infty} on $[0,T]$, for given $T>0$. It is convenient to write \eqref{intro-mainprobl-nc-infty} in the form
\begin{equation}\label{intro-mainprobl-nc-l0}
    \ddot{u}(t) - \dive(\sigma_0(t))= \ell_0(t) \quad t\in [0,T],
\end{equation}
where
\begin{equation}\label{tensore-da-0}
    \sigma_0(t):=\C Eu(t)+\V Eu(t) - \int^t_{0} \textnormal{e}^{\tau-t}\,\V Eu(\tau) \, \textnormal{d}\tau,
\end{equation}
\begin{equation}
    \ell_0(t):=f(t) - \dive F_0(t),
\end{equation}
\begin{equation}\label{F_0}
     F_0(t):=\int^0_{-\infty} \textnormal{e}^{\tau-t}\,\V Eu_0(\tau) \, \textnormal{d}\tau
\end{equation}
and $u_0$ is a function that represents the displacement on $(-\infty,0]$, namely $u(s)=u_0(s)$ for every $s\in (-\infty,0]$.

When no cracks are present, problems similar to \eqref{intro-mainprobl-nc-infty} and \eqref{intro-mainprobl-nc-l0} were studied by Boltzmann (\cite{Boltz_1}, \cite{Boltz_2}) and Volterra (\cite{Volterra_1}, \cite{Volterra_2}), while recent results can be found in \cite{DL_V1}, \cite{Fab-Gi-Pata}, \cite{Fab-Morro}, and \cite{Slepyan}.

In this paper we study the problem on a bounded open set $\Omega\subset \R^2$. The crack at time $t\in[0,T]$ is a 1-dimensional closed subset $\Gamma_t$ of $\Omega$ and the irreversibility of crack growth means that $\Gamma_t \subseteq \Gamma_\tau$ if $t \leq \tau$. For technical reasons we assume that the shape of the cracks and their dependence on time is sufficiently regular, with precise a priori estimates.

In the case of smooth functions, equation \eqref{intro-mainprobl-nc-l0} is satisfied on $\Omega\setminus \Gamma_t$ with suitable boundary conditions (on the Dirichlet part $\partial_D\Omega$ and on the Neumann $\partial_N\Omega$ of $\partial\Omega$) and with prescribed initial conditions. Namely, $u$ and $\{\Gamma_t\}_{t\in [0,T]}$ satisfy
\begin{alignat}{2}
    & \ddot{u}(t) - \dive (\sigma_0(t)) = \ell_0(t) \qquad &&\textnormal{in } \Om\setminus\Gamma_t,\label{eq:forte1}\\
    &u(t) = u_D(t) \qquad && \text{on $\partial_D\Om$,} \label{eq:forte2} \\
    &\sigma_0(t)\nu=F_0(t)\nu\qquad && \text{on $\partial_N\Om$,}\\
    &\sigma^{\pm}_0(t)\nu=F^\pm_0(t)\nu\qquad && \text{on $\Gamma_t$,}\label{eq:bcNCrack}\\
    &u(0)=u^0\quad\textnormal{and}\quad\dot u(0)=u^1\quad && {}\label{eq:forte5} 
    \end{alignat}
    for every $t\in[0,T]$, where $u_D$ is the Dirichlet condition, $u^0$ is the initial condition for the displacement, $u^1$ is the initial condition for the velocity, $\nu$ is the unit normal, and the symbol $\pm$ in \eqref{eq:bcNCrack} denotes suitable limits on each side of $\Gamma_t$. In the paper we consider a weak formulation (see Definition \ref{def:weak-sol-t0t1}) which coincides with the one in \eqref{eq:forte1}-\eqref{eq:forte5} under suitable regularity assumptions.

When $\{\Gamma_t\}_{t\in [0,T]}$ is prescribed, problem \eqref{eq:forte1}-\eqref{eq:forte5} has been studied in \cite{Sapio} and \cite{Cianci-Dalmaso}. More precisely, in \cite{Sapio} an existence theorem is proved, while in \cite{Cianci-Dalmaso} one can find results regarding uniqueness and continuous dependence of $u$ on the data (in particular on the cracks).

In the model considered in our paper the unknown of the problem is the family of cracks $\{\Gamma_t\}_{t\in [0,T]}$ which, in the spirit of \cite{Dalmaso-Larsen-Toader2016} and \cite{Dalmaso-Larsen-Toader}, must satisfies the following conditions:
\begin{itemize}
    \item[a)] an energy dissipation balance (consistent with dynamic Griffith's theory) for the solution $u$ of \eqref{eq:forte1}-\eqref{eq:forte5} (see Definition \ref{def:Creg-t0t1}): the sum of the kinetic and elastic energies and of the energies dissipated by viscosity and crack growth balances the work done by the forces acting on the system;
    \item[b)] a maximal dissipation condition, depending on a parameter $\eta>0$ (see Definition \ref{def:maximal-dissip}), which forces the crack to run as fast as possible.
\end{itemize}
Condition a) is a dynamic version of Griffith's criterion (see \cite{Griffith} for the quasistatic case and \cite{Mott} for the dynamic problem).

The main result of this paper is that, given initial and boundary conditions satisfying suitable hypotheses, there exists a $\{\Gamma_t\}_{t\in [0,T]}$ satisfying a) and b) (see Theorem \ref{thm:main-dissipat}).

The proof follows the lines of \cite{Dalmaso-Larsen-Toader}, where a similar problem is studied for the case of pure elastodynamics. To deal with the memory term appearing in \eqref{tensore-da-0}, we use the results of \cite{Sapio} and \cite{Cianci-Dalmaso}. In particular the continuous dependence on the data obtained in \cite{Cianci-Dalmaso} is a fundamental tool for a compactness argument that plays a key role in the proof of Theorem \ref{thm:main-dissipat}.

The structure of the paper is the following:
    \begin{itemize}
        \item in Section \ref{Formulation} we give a precise formulation of the problem and we give all the preliminary results;
        
        \item in Section \ref{sect:energy-balance} we define the class of cracks $\{\Gamma_t\}_{t\in [0,T]}$ such that the energy balance described in a) is satisfied and we prove a compactness result;
        
        \item in Section \ref{sect:existence-coupled} we define the maximal dissipation condition and we prove the main result of the paper (Theorem \ref{thm:main-dissipat}).
    \end{itemize}

\section{Formulation of the problem}\label{Formulation}

The reference configuration of our problem
is a bounded open set $\Om\subset\mathbb{R}^2$, with Lipschitz boundary $\partial\Om$ and we assume that $\partial\Om=\partial_D\Om\cup \partial_N\Om$, where  $\partial_D\Om$ and $\partial_N\Om$ are disjoint (possibly empty) Borel sets, on which we prescribe
Dirichlet and Neumann boundary conditions respectively. Moreover, we fix a time interval $[0,T]$, with $T>0$.

We give a precise definition of the admissible cracks of our model using a suitable class of curves. The following definitions and results are based on \cite{Dalmaso-Larsen-Toader2016} and \cite{Dalmaso-Larsen-Toader}.
The curves are always parameterized using the arc-length parameter $s$ and for a given curve $\gamma: [a_\gamma, b_\gamma] \to \mathbb{R}^2$ we define $\Gamma^\gamma:= \gamma([a_\gamma,\,b_\gamma])$ and $\Gamma^\gamma_s:= \gamma([a_\gamma,\,s])$, for every $s \in [a_\gamma,\,b_\gamma]$. When it is clear from the context we omit the dependence on $\gamma$ and we write $\Gamma$ and $\Gamma_s$ instead of $\Gamma^\gamma$ and $\Gamma^\gamma_s$. In order to describe the initial crack, we fix a curve $\gamma_0: [a_0,\, 0] \to \overline{\Omega}$ such that $\gamma_0(a_0)\in \partial\Omega$, $\gamma_0(s)\in\Omega$ for every $s\in (a_0,0]$ and we define the initial crack as
\begin{equation*}
    \Gamma_0:= \gamma_0([a_0, 0]).
\end{equation*}
We suppose that $\gamma_0$ is of class $C^{3,1}$ and that it is transversal to $\partial \Omega$ at $\gamma_0(a_0)$ (there exists an isosceles triangle
contained in $\overline{\Omega}$ with vertex in $\gamma_0(a_0)$ and axis parallel to $\gamma'_0(a_0)$). We fix two constants $r>0$ and $L>0$ and we now define the space of admissible crack paths.

\begin{definition}\label{def:spazioGrL}
Let $\mathcal{G}_{r,L}$ be the space of simple curves $\gamma:[a_0, b_\gamma] \to \overline{\Omega}$ of class $C^{3,1}$, with $a_0 < 0 \leq b_\gamma$, such that
\begin{itemize}
    \item[(a)] $\gamma(s)=\gamma_0(s)$ for every $s \in [a_0,\,0]$,
    \item[(b)] $|\gamma'(s)|=1$ for every $s \in [a_0, b_\gamma]$,
    \item[(c)] the two open disks of radius $r$ tangent to $\Gamma$ at $\gamma(s)$ do not intersect $\Gamma$,
    \item[(d)] $\textnormal{dist}(\gamma([0,b_\gamma]),\partial\Omega) \geq 2r $,
    \item[(e)] $|\gamma^{(3)}(s)|\leq L$, $|\gamma^{(3)}(s_2)-\gamma^{(3)}(s_1)|\leq L |s_2-s_1|$ for any $s,\,s_1,\,s_2\in [a_0,b_\gamma]$,
\end{itemize}
where $\gamma^{(i)}$ denotes the $i-$th derivative of $\gamma$.
\end{definition}

We fix $\gamma_0$, $r$, and $L$ such that $\mathcal{G}_{r,L}\neq \emptyset$.

\begin{remark}
 By (a) and (d) we have $|a_0| \geq 2r$. Condition (c) implies $|\gamma^{(2)}(s)| \leq 1/r$ for every $s \in [a_0,\,b_\gamma]$.
\end{remark}

\begin{definition}\label{def:conv-unif}
Let $\gamma_k$ be a sequence of curves in $\mathcal{G}_{r,L}$ and let $\gamma \in \mathcal{G}_{r,L}$. We say that $\gamma_k$ converges uniformly to $\gamma$ if $b_{\gamma_k} \to b_\gamma$ and for every $b\in (0,b_\gamma)$ we have $\gamma_k|_{[a_0,b]} \to \gamma|_{[a_0,b]}$ uniformly in $[a_0,b]$.
\end{definition}

\begin{lemma}\label{lemma:estensione}
 There exist two constants $\hat r$ and $\hat L$, with $0 < \hat r < r$ and $\hat L> L$, depending only on $r$ and $L$, such that for every $\gamma \colon [a_0, b_\gamma] \to \overline\Omega$ with $\gamma \in \mathcal{G}_{ r,  L}$ there exists an extension $\hat\gamma \colon [a_0, b_\gamma + \hat r] \to \overline\Omega$ of $\gamma$ with $\hat\gamma \in \mathcal{G}_{\hat r, \hat L}$, whose image will be indicated by $\hat \Gamma$. Moreover, the extension can be chosen in such a way that the uniform convergence of $\gamma_k$ implies the uniform convergence of the corresponding extensions $\hat \gamma_k$.
\end{lemma}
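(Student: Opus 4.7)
The plan is to realize $\hat\gamma$ as the canonical unit-speed extension of $\gamma$ obtained from its signed curvature. Every planar unit-speed $C^{3,1}$ curve is, up to a rigid motion, uniquely determined by its curvature function, and for $\gamma\in\mathcal G_{r,L}$ the curvature $\kappa(s):=\gamma''(s)\cdot N(s)$ (with $N$ the left-rotated unit normal) belongs to $C^{1,1}([a_0,b_\gamma])$, with $|\kappa|\leq 1/r$ and with bounds on $\kappa'$ and $\operatorname{Lip}(\kappa')$ depending only on $r$ and $L$. First I would extend $\kappa$ linearly past $b_\gamma$, setting
\begin{equation*}
\hat\kappa(s):=\kappa(b_\gamma)+(s-b_\gamma)\,\kappa'(b_\gamma),\qquad s\in[b_\gamma,b_\gamma+\hat r],
\end{equation*}
and define $\hat\gamma$ by $\hat\gamma'(s)=(\cos\hat\theta(s),\sin\hat\theta(s))$ with $\hat\theta'=\hat\kappa$, $\hat\theta(b_\gamma)=\theta(b_\gamma)$, $\hat\gamma(b_\gamma)=\gamma(b_\gamma)$. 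By construction $\hat\gamma\in C^{3,1}([a_0,b_\gamma+\hat r])$, $\hat\gamma=\gamma$ on $[a_0,b_\gamma]$, and $|\hat\gamma'|\equiv 1$, which gives properties (a) and (b) immediately.

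Next I would verify the easy items (d), (e) and the local part of (c). Since the extension has unit speed on an interval of length $\hat r$, it remains inside the ball of radius $\hat r$ around $\gamma(b_\gamma)$; combined with $\textnormal{dist}(\gamma(b_\gamma),\partial\Omega)\geq 2r$ from the original (d), this yields (d) at level $\hat r$ provided $\hat r\leq 2r/3$. On the extension the identities $\hat\gamma''=\hat\kappa N$, $\hat\gamma^{(3)}=\hat\kappa' N-\hat\kappa^2\hat\gamma'$, and $\hat\gamma^{(4)}=-\hat\kappa^3 N-3\hat\kappa\hat\kappa'\hat\gamma'$ (using $\hat\kappa''\equiv 0$) yield bounds on $|\hat\gamma^{(3)}|$ and $\operatorname{Lip}(\hat\gamma^{(3)})$ depending only on $r$ and $L$, giving (e) for some $\hat L>L$. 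The pointwise curvature bound $|\hat\kappa(s)|\leq 1/r+\hat r\,C(r,L)$ on $[b_\gamma,b_\gamma+\hat r]$ can be made $\leq 1/\hat r$ by a smallness condition on $\hat r$ in terms of $r,L$, which is the first half of condition (c).

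The main obstacle is the nonlocal half of (c): ruling out that the two tangent disks at a new point meet a distant portion of $\Gamma$, or that the disks at a point of $\Gamma$ far from $b_\gamma$ meet the extension. For this I would use that condition (c) at level $r$ implies that $\Gamma$ has reach at least $r$, so the tubular neighborhood of width $r$ around $\Gamma$ projects uniquely; in particular, $B_r(\gamma(b_\gamma))\cap\Gamma$ is a single arc terminating at $\gamma(b_\gamma)$. For $\hat r$ sufficiently small (in terms of $r,L$), the whole extension and its tangent disks of radius $\hat r$ sit inside $B_r(\gamma(b_\gamma))$, where $\hat\Gamma$ is a $C^{1,1}$-small perturbation of the straight continuation of $\gamma$ at $\gamma(b_\gamma)$; a direct estimate based on $|\hat\kappa|\leq 1/\hat r$ then shows the tangent disks remain on the correct sides of this arc. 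For points $\gamma(s)$ of $\Gamma$ away from $b_\gamma$, the tangent disks of radius $\hat r<r$ are contained in the original disks of radius $r$, hence avoid $\Gamma$ by hypothesis and also avoid the extension by the same ball argument.

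For the stability statement, I would observe that the whole construction depends only on $\gamma$ and on the four quantities $\gamma(b_\gamma)$, $\gamma'(b_\gamma)$, $\kappa(b_\gamma)$, $\kappa'(b_\gamma)$. If $\gamma_k\to\gamma$ uniformly in the sense of Definition \ref{def:conv-unif}, the uniform $C^{3,1}$-bounds together with Ascoli--Arzelà give locally uniform $C^3$-convergence on $[a_0,b_\gamma)$, whence these four quantities evaluated at $b_{\gamma_k}$ converge to their counterparts at $b_\gamma$. Continuous dependence on the data in the reconstruction ODE then yields $\hat\gamma_k\to\hat\gamma$ uniformly on $[a_0,b+\hat r]$ for every $b<b_\gamma$, which is precisely the convergence required by Definition \ref{def:conv-unif}.
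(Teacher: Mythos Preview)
The paper does not contain a proof of this lemma; immediately after its statement (and that of Lemma~\ref{lemma:compattezza-gamma}) the authors simply write ``For a proof of the previous two lemmas see \cite{Dalmaso-Larsen-Toader}.'' There is therefore no argument in the present paper against which your proposal can be compared.

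That said, your outline via the signed curvature --- extend $\kappa$ linearly beyond $b_\gamma$ and reconstruct $\hat\gamma$ from the planar Frenet system --- is a natural and essentially correct route to such an extension result. The verification of (a), (b), (d), (e) and of the pointwise curvature bound is clean, and the stability clause is handled correctly once one notes that uniform convergence together with the uniform $C^{3,1}$ bounds upgrades, via Ascoli--Arzel\`a, to $C^3$ convergence on compact subintervals of $[a_0,b_\gamma)$, whence the endpoint data $\gamma_k(b_{\gamma_k})$, $\gamma_k'(b_{\gamma_k})$, $\kappa_k(b_{\gamma_k})$, $\kappa_k'(b_{\gamma_k})$ converge by the uniform Lipschitz estimate on third derivatives. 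The one place where your sketch is thin is the global, nonlocal part of (c): the sentence ``a direct estimate based on $|\hat\kappa|\leq 1/\hat r$ then shows the tangent disks remain on the correct sides of this arc'' hides a genuine (if elementary) geometric argument, and the claim that $B_r(\gamma(b_\gamma))\cap\Gamma$ is a single arc terminating at $\gamma(b_\gamma)$ also needs a short justification from the reach condition. These steps are routine to complete, but as written they are assertions rather than proofs; since the paper itself defers entirely to \cite{Dalmaso-Larsen-Toader}, you may wish either to cite that reference as well or to spell out these two points.
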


\begin{lemma}\label{lemma:compattezza-gamma}
 Let $\gamma_k$ be a sequence of curves in $\mathcal{G}_{r,L}$. Then there
exist a subsequence, not relabelled, and a curve $\gamma\in \mathcal{G}_{r,L}$ such that $\gamma_k$ converges to $\gamma$ uniformly.
\end{lemma}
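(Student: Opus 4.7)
The plan is to apply the Arzelà--Ascoli theorem, exploiting the uniform $C^{3,1}$--bounds built into Definition \ref{def:spazioGrL}(b), (c), (e).

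First, I would show that $\sup_k b_{\gamma_k} < +\infty$, so that after passing to a subsequence $b_{\gamma_k} \to b^* \in [0,+\infty)$. For this, condition (c) makes the normal exponential map $(s,t) \mapsto \gamma_k(s) + t\, n_k(s)$ injective on $[a_0, b_{\gamma_k}] \times [-r, r]$, and the pointwise bound $|\gamma_k''| \leq 1/r$ (noted in the Remark after the definition) forces its Jacobian to exceed $1/2$ on $[-r/2, r/2]$. Hence a tubular neighborhood of $\Gamma^{\gamma_k}$ has area at least a positive constant times $b_{\gamma_k} - a_0$, while being contained in a fixed $r$--neighborhood of $\overline{\Omega}$; this gives the required bound.

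Next, I would apply Lemma \ref{lemma:estensione} to obtain extensions $\hat\gamma_k \in \mathcal{G}_{\hat r, \hat L}$ defined on $[a_0, b_{\gamma_k}+\hat r]$. For any $b < b^* + \hat r$, eventually $\hat\gamma_k$ is defined on $[a_0, b]$, and on this interval $\hat\gamma_k, \hat\gamma_k', \hat\gamma_k'', \hat\gamma_k^{(3)}$ are uniformly bounded and equicontinuous (the third derivative is Lipschitz with constant $\hat L$). A standard diagonal application of Arzelà--Ascoli along a sequence $b_j \uparrow b^*+\hat r$ then yields a subsequence converging in $C^3([a_0,b_j])$ to a limit $\tilde\gamma$; I set $b_\gamma := b^*$ and $\gamma := \tilde\gamma|_{[a_0,b_\gamma]}$.

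Finally, I would verify $\gamma \in \mathcal{G}_{r,L}$ and that $\gamma_k \to \gamma$ in the sense of Definition \ref{def:conv-unif}. Properties (a), (b), (e) pass to the $C^3$--limit directly; (d) passes to the limit by uniform convergence since $\{x \in \overline\Omega : \textnormal{dist}(x, \partial\Omega) \geq 2r\}$ is closed; and (c) is closed under $C^2$--convergence of arc-length parameterizations, since a tangent disk of radius $r$ at $\gamma(s)$ meeting $\Gamma^\gamma$ would also meet $\Gamma^{\gamma_k}$ at the corresponding point for large $k$, contradicting $\gamma_k \in \mathcal{G}_{r,L}$. The main obstacle I anticipate is showing that $\gamma$ is \emph{simple}: a uniform limit of simple curves need not be simple. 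To handle this, I would derive from (c) a quantitative two-sided bound of the form $|\gamma_k(s_1)-\gamma_k(s_2)| \geq c(r)\min(|s_1-s_2|, r)$, which is inherited by $\gamma$ in the limit and thus excludes self-intersections. Once $\gamma \in \mathcal{G}_{r,L}$ is established, the convergences $b_{\gamma_k} \to b_\gamma$ and $\gamma_k|_{[a_0,b]} \to \gamma|_{[a_0,b]}$ uniformly for every $b < b_\gamma$ hold by construction.
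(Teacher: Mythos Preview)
The paper does not give its own proof of this lemma; immediately after stating Lemmas~\ref{lemma:estensione} and~\ref{lemma:compattezza-gamma} it writes ``For a proof of the previous two lemmas see \cite{Dalmaso-Larsen-Toader}.''  So there is no in-paper argument to compare against.

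Your outline is a correct direct proof and is in the spirit of what one expects the cited reference to contain: a uniform bound on the lengths $b_{\gamma_k}$, Arzel\`a--Ascoli using the built-in $C^{3,1}$ bounds, and a closedness check for conditions (a)--(e).  The tubular-neighbourhood area argument for $\sup_k b_{\gamma_k}<\infty$ is the standard one.  Invoking Lemma~\ref{lemma:estensione} to obtain a common domain slightly past $b^*$ is legitimate here, since that lemma is stated before this one and its (also deferred) proof is a local endpoint construction independent of the present compactness; alternatively you could avoid it by working on $[a_0,b]$ for $b<b^*$ and using the uniform $C^{3,1}$ modulus of continuity to extend the limit to $b^*$.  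Your identification of simplicity as the only non-routine point is accurate, and the proposed remedy---a quantitative chord--arc inequality $|\gamma_k(s_1)-\gamma_k(s_2)|\ge c(r)\min(|s_1-s_2|,r)$ derived from the rolling-disk condition (c)---is exactly the standard way to make simplicity stable under uniform limits.  One small point to watch: when you verify (c) and (e) for the limit, argue with the original curves $\gamma_k\in\mathcal{G}_{r,L}$ (restricted to $[a_0,b]$) rather than with the extensions $\hat\gamma_k\in\mathcal{G}_{\hat r,\hat L}$, so that you recover the constants $r$ and $L$ and not merely the weaker $\hat r$, $\hat L$.
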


For a proof of the previous two lemmas see \cite{Dalmaso-Larsen-Toader}.

We have to describe the dependence of the crack length on the time. We fix two constants $\mu>0$ and $M>0$ which bound the speed of the crack tip and some higher order derivatives of the crack length, respectively.

\begin{definition}\label{def:lunghezze}
Let $T_0 < T_1$. The class $\mathcal{S}^{reg}_{\mu,M}(T_0,T_1)$ is composed of all nonnegative functions satisfying the following conditions:
\begin{equation}
    s \in C^{3,1}([T_0,T_1]),
\end{equation}
\begin{equation}
    0 \leq \dot{s}(t) \leq \mu
\end{equation}
\begin{equation}
    |\ddot s(t)| \leq M,\,\,\, |\dddot s(t)| \leq M, \,\,\, |\dddot s(t_1) -\dddot s(t_2)| \leq M |t_1 - t_2|,
\end{equation}
for $t, t_1, t_2 \in [T_0,T_1]$, where dots denote derivatives with respect to time. We denote by $\mathcal{S}^{piec}_{\mu,M}(T_0,T_1)$ the set of all functions $s\in C^0([T_0,T_1])$ such that there exists a finite subdivision $T_0=\tau_0 < \tau_1 < ... < \tau_k = T_1$ for which $s|_{[\tau_{j-1},\tau_j]} \in \mathcal{S}^{reg}_{\mu,M}(\tau_{j-1},\tau_j)$. The minimal set $\{\tau_0, \tau_1, ..., \tau_k \}$ for which this property holds is denoted by $sing(s)$.
\end{definition}

Given $0\leq T_0 < T_1 \leq T$, $\gamma \in \mathcal{G}_{r,L}$, $s\in \mathcal{S}^{piec}_{\mu,M}(T_0,T_1)$, with $s(T_1) \leq b_\gamma$, the time dependent cracks corresponding to these functions are given by
\begin{equation*}
    \Gamma^\gamma_{s(t)}:=\gamma([a_0,s(t)]) \quad \textnormal{for all } t \in [T_0,T_1],
\end{equation*}
and the corresponding cracked domains are
\begin{equation*}
    \Omega^\gamma_{s(t)}:=\Omega\setminus\Gamma^\gamma_{s(t)} \quad \textnormal{for all } t \in [T_0,T_1].
\end{equation*}
For simplicity of notation we sometimes denote $\Gamma^\gamma_{s(t)}$ by $\Gamma_{s(t)}$, when $\gamma$ is clear from the context.

In \cite{Caponi}, \cite{Caponi-tesi}, and \cite{Cianci-Dalmaso} the cracks are described using a family of time-dependent diffeomorphism $\Phi, \Psi: [0, T] \times \overline{\Om} \to \overline{\Om}$. Thanks to the following result it is possible to obtain the same maps also in our case. For a proof see \cite[Lemma 2.8]{Dalmaso-Larsen-Toader2016}.

\begin{lemma}\label{lemma:costruzione-diffeom}
Let $\varepsilon>0$ and let $\rho\in (0,\hat r /2)$, where $\hat r$ is the constant that appears in Lemma \ref{lemma:estensione}. Then there exists two constants $\delta \in (0,\rho/\mu)$ and $C>0$ depending only on $r$, $L$, $\mu$, $M$, $\varepsilon$, and $\rho$, with the following property: for every $\gamma\in \mathcal{G}_{r,L}$, for every $t_0<t_1$, and for every $s \in \mathcal{S}^{reg}_{\mu,M}(t_0, t_1)$, with $t_1-t_0 \leq \delta$, $s(t_1)\leq b_\gamma$, we can define two functions $\Phi,\Psi\colon [t_0,t_1] \times \overline{\Omega} \to \overline{\Omega}$ of class $C^{2,1}$ with the following properties:
\begin{itemize}
    \item[(a)] for every $t\in [t_0,t_1]$ we have $\Phi(t,\overline{\Omega})=\overline{\Omega}$, $\Phi(t,\hat \Gamma)=\hat \Gamma$ (where $\hat\Gamma$ is the set that appears in Lemma \ref{lemma:estensione}), $\Phi(t,\Gamma_{s(t_0)})=\Gamma_{s(t)},$ and $\Phi(t,y)=y$ on $\overline{\Omega}\setminus B(\gamma(s(t_0)),2\rho);$
    \item[(b)] $\Phi(t_0,y)=y$ for every $y \in \overline{\Omega}$;
    \item[(c)] for every $t\in [t_0,t_1]$, $\Psi(t,\cdot)$ is the inverse of $\Phi(t,\cdot)$ on $\overline{\Omega}$;
    \item[(d)] for every $t\in [t_0,t_1]$ we have $1-\varepsilon \leq \det D\Phi(t,y) \leq 1 + \varepsilon$ and $1-\varepsilon \leq \det D\Psi(t,y) \leq 1 + \varepsilon$ for every $x,y\in\overline\Omega$, where $D$ denotes the spatial jacobian matrix.
    \item[(e)] for every $t\in [t_0,t_1]$ we have $|\partial_t\Phi(t,y)|\leq \mu(1+\varepsilon)$ for every $y\in \overline\Omega$;
    \item[(f)] the absolute values of all partial derivatives of $\Phi$ and of $\Psi$ of order less than or equal to two, as well as the Lipschitz constants of all second derivatives, are bounded by $C$;
    \item[(g)] if $\gamma_k\in \mathcal{G}_{r,L}$ converges to $\gamma$ uniformly, if $s_k\in \mathcal{S}^{reg}_{\mu,M}(t_0,t_1)$ converges to $s$ uniformly, with $s_k(t_1)\leq b_{\gamma_k}$ for every $k$, then the corresponding diffemorphisms satisfy $\Phi_k(t,x)\to \Phi(t,x)$ for every $t\in [t_0,t_1]$ and for every $x\in \overline{\Omega}.$
\end{itemize}
\end{lemma}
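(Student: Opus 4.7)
The plan is to construct $\Phi(t,\cdot)$ as a small, compactly supported perturbation of the identity which slides points along the extended curve $\hat\Gamma$ by exactly the arc-length increment $s(t)-s(t_0)$. Because $\rho<\hat r/2$ and $\hat\gamma\in\mathcal{G}_{\hat r,\hat L}$, the piece of $\hat\gamma$ lying in $B(\gamma(s(t_0)),2\rho)$ admits a tubular neighborhood $\mathcal{U}$ on which the map $(\xi,\eta)\mapsto\hat\gamma(\xi)+\eta\,\nu(\xi)$, with $\nu$ the unit normal to $\hat\gamma$, is a $C^{2,1}$ diffeomorphism whose $C^{2,1}$ estimates depend only on $\hat r$ and $\hat L$, and hence only on $r$ and $L$.

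In these curvilinear coordinates I fix a smooth cutoff $\chi$ equal to $1$ on $B(\gamma(s(t_0)),\rho)$ and vanishing outside $B(\gamma(s(t_0)),2\rho)$, and I choose $\delta\in(0,\rho/\mu)$ so small that $\phi(t):=s(t)-s(t_0)$ satisfies $|\phi(t)|\le\mu\delta<\rho$ on $[t_0,t_1]$. The candidate diffeomorphism is
\begin{equation*}
    \Phi(t,\xi,\eta)\ :=\ \bigl(\xi+\chi(\xi,\eta)\,\phi(t),\,\eta\bigr)
\end{equation*}
in coordinates, extended by the identity outside $\mathcal{U}$. Property (b) is then immediate, and (a) follows because the slice $\{\eta=0\}$ of $\mathcal{U}$ is exactly $\hat\Gamma\cap\mathcal{U}$ and the $\xi$-shift carries the tip at arc-length $s(t_0)$ to the tip at arc-length $s(t)$. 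Since $D\Phi=I+\phi(t)\,\nabla\chi\otimes e_\xi$ in coordinates, a further shrinking of $\delta$ in terms of $\varepsilon,\rho,\mu$ yields the Jacobian bound (d); then global invertibility (injective on $\mathcal{U}$ by $\xi$-monotonicity, identity outside) produces $\Psi$ satisfying (c) with the same type of bounds. Property (e) follows from $\partial_t\Phi=(\chi\,\dot s(t),0)$, and (f) from the $C^{2,1}$ regularity of the coordinate change, the smoothness of $\chi$, and the $C^{3,1}$ regularity of $\hat\gamma$ and $s$, with all constants extracted from $r,L,\mu,M,\varepsilon,\rho$ alone.

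The delicate point I expect to be the main technical obstacle is property (g), together with the required uniformity of the constants. For (g), uniform convergence $\gamma_k\to\gamma$ and the a priori bounds in Definition \ref{def:spazioGrL}, combined with Lemma \ref{lemma:estensione} and Ascoli--Arzel\`a, yield $C^2$ convergence of the extensions $\hat\gamma_k$ on a fixed ball around $\gamma(s(t_0))$; fixing the cutoff once in the limit coordinates and transporting it, together with $s_k\to s$, then gives $\Phi_k(t,x)\to\Phi(t,x)$ for every $t$ and $x$. The careful bookkeeping needed to keep every constant in (d)--(f) independent of the particular $(\gamma,s)\in\mathcal{G}_{r,L}\times\mathcal{S}^{reg}_{\mu,M}(t_0,t_1)$ --- propagating the uniform tangent-disk condition, the uniform $C^{3,1}$ bound on $\gamma$, and the uniform speed and derivative bounds on $s$ through the tubular chart --- is the least glamorous but most technical part of the argument, and is what ultimately forces the smallness of $\delta$ and the choice of $C$ to depend only on $r,L,\mu,M,\varepsilon,\rho$.
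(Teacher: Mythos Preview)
The paper does not give its own proof of this lemma; the sentence immediately preceding the statement reads ``For a proof see \cite[Lemma 2.8]{Dalmaso-Larsen-Toader2016}.'' Your plan---build a tubular chart around $\hat\Gamma$ near the tip and perform an arc-length shift damped by a compactly supported cutoff---is the natural construction and is essentially what is carried out in that reference, so there is nothing substantive to compare.

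One technical point in your sketch does need adjustment, however. You claim that (e) follows from ``$\partial_t\Phi=(\chi\,\dot s(t),0)$'', but that is the expression in the curvilinear $(\xi,\eta)$-coordinates, not in $\overline\Omega$. Writing $G(\xi,\eta)=\hat\gamma(\xi)+\eta\,\nu(\xi)$ for the chart, the Cartesian time derivative is
\[
\partial_t\Phi(t,x)=\chi\,\dot s(t)\,\partial_\xi G\bigl(\xi+\chi\phi(t),\eta\bigr),\qquad (\xi,\eta)=G^{-1}(x),
\]
and $|\partial_\xi G(\xi,\eta)|=|1-\eta\,\kappa(\xi)|\le 1+|\eta|/\hat r$. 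With $\chi$ supported merely in $B(\gamma(s(t_0)),2\rho)$ one only obtains $|\partial_t\Phi|\le\mu\,(1+2\rho/\hat r)$, which need not be $\le\mu(1+\varepsilon)$ since $\rho\in(0,\hat r/2)$ and $\varepsilon>0$ are both prescribed and independent; shrinking $\delta$ does not help here because the bound involves $\dot s$, not $\phi$. The standard remedy is to take $\chi$ supported in a thin normal strip $\{|\eta|<c\,\varepsilon\}$ (with $c$ depending only on $\hat r$), which is compatible with (a) since the crack-mapping property only requires $\chi\equiv1$ along $\hat\Gamma$ near the tip. The resulting $O(1/\varepsilon)$ normal derivatives of $\chi$ are absorbed into the constant $C$ of (f), which is explicitly allowed to depend on $\varepsilon$. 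With this correction your plan goes through.
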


We now define the functional spaces that will be used in order to give the definition of weak solution of the viscoelastic problem \eqref{eq:forte1}-\eqref{eq:forte5}.

We define $\R^{2\times 2}$ as the space of real $2\times 2$ matrix and $\R^{2\times 2}_{sym}$ as the space of real $2\times 2$ symmetric matrices. The euclidean scalar product between the matrices $A$ and $B$ is denoted by $A:B$. For every $A\in \mathbb{R}^{2\times 2}$ the symmetric part $A_{sym}\in \mathbb{R}^{2\times2}$ is defined as $A_{sym}=\frac{1}{2}(A+A^T)$, where $A^T$ denotes the transpose matrix of $A$.  For any pair of vector spaces we define $\mathcal{L}(X; Y)$ as the space of linear and continuous maps form $X$ into $Y$. Let $0 < \lambda < \Lambda$ be two fixed constants. We now define the space of tensors that will be used in the paper.

\begin{definition}\label{def:spazio-tensori}
We define $\mathcal{E}(\lambda,\Lambda)$ as the set of all maps $\mathbb{L}:\overline\Omega \to \mathcal{L}(\R^{2\times 2}; \R^{2\times 2} )$ of class $C^2$ such that for every $x\in \overline{\Omega}$ we have 
\begin{equation}
    \mathbb{L}(x)A=\mathbb{L}(x)A_{sym}\in\mathbb{R}^{2\times 2}_{sym} \quad \textnormal{for every } A \in \mathbb{R}^{2\times 2},
\end{equation}
\begin{equation}
    \mathbb{L}(x)A:B=\mathbb{L}(x)B:A\quad \textnormal{for every } A,\,B \in \mathbb{R}^{2\times 2},
\end{equation}
\begin{equation}
    \lambda|A_{sym}|^2\leq \mathbb{L}(x)A:A \leq \Lambda|A_{sym}|^2 \quad \textnormal{for every } A \in \mathbb{R}^{2\times 2}.
\end{equation}
\end{definition}

We now fix the following maps 
\begin{equation}\label{eq:C-V}
\C,\V \in \mathcal{E}(\lambda,\Lambda), \quad \A:=\C+\V
\end{equation}
where $\C(x)$ and $\V(x)$ respectively represent the elasticity and viscosity tensor at the point $x\in \overline\Omega$.

Given $\gamma\in \mathcal{G}_{r,L}$, $0\leq T_0 < T_1 \leq T$, and $s \in \mathcal{S}^{piec}_{\mu,M}(T_0,T_1)$, with $s(T_1)\leq b_\gamma$, we now introduce the function spaces that will be used in the precise formulation of problem \eqref{eq:forte1}-\eqref{eq:forte5}.

We recall that $\Gamma:=\gamma([a_0,b_\gamma]; \mathbb{R}^2)$. For every $u\in H^1(\Omega\setminus\Gamma;\mathbb{R}^2)$ $Du$ denotes jacobian matrix in the sense of distributions on $\Omega\setminus\Gamma$ and $Eu$ is its symmetric part, i.e.,
\begin{equation*}
    Eu:=\tfrac{1}{2}(Du + Du^T).
\end{equation*}

The following lemma is an extension of the second Korn's inequality (see, e.g., \cite{Ol-Sha-Yos}) to the case of cracked domain. For a proof see, e.g., \cite{Cianci-Dalmaso}.
\begin{lemma}
 Let $\gamma \in \mathcal{G}_{r,L}$ and let $\Gamma:=\gamma([a_0,b_\gamma]; \mathbb{R}^2)$. Then there exists a constant $K$, depending only on $\Om$ and $\Gamma$, such that
\begin{equation}
\|Du\|^2 \leq K ( \|u\|^2 + \|Eu\|^2 ) 
\end{equation}
for every $u\in H^1(\Omega\setminus\Gamma;\mathbb{R}^2)$, where $\|\cdot\|$ denotes the $L^2$ norm.
\end{lemma}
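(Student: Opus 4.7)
The plan is to reduce to the classical second Korn inequality on Lipschitz domains by prolonging $\gamma$ past its tip until it reaches $\partial\Omega$, so that the augmented crack splits $\Omega$ into Lipschitz pieces.

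First, I would prolong $\gamma$ to a simple curve $\tilde\gamma\colon [a_0, b]\to\overline\Omega$ of class $C^{1,1}$, coinciding with $\gamma$ on $[a_0, b_\gamma]$, with $\tilde\gamma((b_\gamma,b])\subset \Omega\setminus\Gamma$, and meeting $\partial\Omega$ transversally at $\tilde\gamma(b)\in\partial\Omega$. Such an extension exists by a direct geometric construction: by (d) the tip $\gamma(b_\gamma)$ sits at distance at least $2r$ from $\partial\Omega$; by (c) the tubular neighborhood of $\Gamma$ of width $r$ does not self-intersect, so $\gamma$ can first be continued past the tip within that tube, and then concatenated with any embedded $C^{1,1}$ arc joining its endpoint to $\partial\Omega$ inside $\Omega\setminus\Gamma$. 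Set $\tilde\Gamma:=\tilde\gamma([a_0,b])$.

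Since $\tilde\Gamma$ is a simple $C^{1,1}$ arc meeting $\partial\Omega$ transversally at both endpoints, each connected component of $\Omega\setminus\tilde\Gamma$ is a bounded Lipschitz domain, and the classical second Korn inequality on bounded Lipschitz domains yields a constant $K$, depending only on $\Omega$ and $\tilde\Gamma$, such that
\begin{equation*}
\|Dv\|_{L^2(\Omega\setminus\tilde\Gamma)}^2 \leq K\bigl(\|v\|_{L^2(\Omega\setminus\tilde\Gamma)}^2+\|Ev\|_{L^2(\Omega\setminus\tilde\Gamma)}^2\bigr)
\end{equation*}
for every $v\in H^1(\Omega\setminus\tilde\Gamma;\mathbb{R}^2)$. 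Given now $u\in H^1(\Omega\setminus\Gamma;\mathbb{R}^2)$, the inclusion $\Omega\setminus\tilde\Gamma\subset\Omega\setminus\Gamma$, together with a routine check on test functions, shows that $u|_{\Omega\setminus\tilde\Gamma}\in H^1(\Omega\setminus\tilde\Gamma;\mathbb{R}^2)$ with distributional gradient equal to the restriction of $Du$. Since the one-dimensional set $\tilde\Gamma\setminus\Gamma$ has zero Lebesgue measure, the three $L^2$ norms over $\Omega\setminus\Gamma$ and over $\Omega\setminus\tilde\Gamma$ coincide, and the displayed inequality transfers verbatim to $u$ on $\Omega\setminus\Gamma$.

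The only non-formal step, and the main obstacle, is the geometric construction of $\tilde\gamma$: it has to be simple, $C^{1,1}$ at the junction $s=b_\gamma$, disjoint from $\Gamma$ beyond the tip, and transversal to $\partial\Omega$ at its free endpoint. This is routine given the $C^{3,1}$ regularity of $\gamma$ and conditions (c)--(d) of Definition \ref{def:spazioGrL}; note however that $K$ inherits its dependence from $\tilde\Gamma$ and therefore from $\Gamma$, which matches exactly the qualitative dependence asserted in the statement (uniformity of $K$ over $\mathcal{G}_{r,L}$ would require quantitative control of the extension and is not needed here).
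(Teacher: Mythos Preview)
Your approach is correct. Note that the paper does not actually contain a proof of this lemma: it simply refers the reader to \cite{Cianci-Dalmaso}. Your strategy---extending the crack past its tip until it reaches $\partial\Omega$ transversally, so that $\Omega\setminus\tilde\Gamma$ decomposes into finitely many Lipschitz subdomains on which the classical second Korn inequality applies---is a standard and valid route, and the transfer of the inequality back to $\Omega\setminus\Gamma$ via the measure-zero difference $\tilde\Gamma\setminus\Gamma$ is clean.

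The geometric construction of $\tilde\gamma$ is, as you say, the only point requiring care, and you have correctly identified the relevant ingredients: the $C^{3,1}$ regularity of $\gamma$, the uniform tubular neighborhood from condition (c), the distance condition (d), and the path-connectedness of $\Omega\setminus\Gamma$ (which holds because $\Gamma$ is a simple arc with only one endpoint on $\partial\Omega$). Two small points worth making explicit in a written-up version: the extension must stay in $\Omega$ except at its terminal point (no intermediate contact with $\partial\Omega$), and the Lipschitz claim for the components of $\Omega\setminus\tilde\Gamma$ should be justified near the two boundary-contact points, where transversality produces a Lipschitz corner. Both are routine.
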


\begin{remark}
  Let $\gamma \in \mathcal{G}_{r,L}$ and let $\Gamma:=\gamma([a_0,b_\gamma]; \mathbb{R}^2)$. Then, using a localization argument (see, e.g., \cite{Cianci-Dalmaso}), we can prove that the trace operator is well defined and continuous from $H^1(\Omega\setminus\Gamma;\mathbb{R}^2)$ into $L^2(\partial\Omega;\mathbb{R}^2)$.
\end{remark}

We set
\begin{equation}\label{eq:VeH-t0t1}
    V^\gamma:=H^1(\Omega\setminus\Gamma;\Rd), \quad H:=L^2(\Omega;\Rd{}), \quad\textnormal{and} \quad \underline{H}:=L^2(\Omega;\Rdd{})
\end{equation}
Since $\mathcal{L}^2(\Gamma)=0$, we have the embedding $V^\gamma \hookrightarrow H \times \underline{H}$ given by $v \mapsto (v,\,Dv) $ and we can see the distrubutional gradient $Dv$ on $\Omega\setminus \Gamma$ as a function defined a.e. on $\Omega$, which belongs to $\underline{H}$.
 
For every finite dimensional Hilbert space $Y$ the symbols $(\cdot\,,\cdot)$ and $\|\cdot\|$ denote the scalar product and the norm in the $L^2(\Omega; Y)$, according to the context.
The space $V^\gamma$ is endowed with the norm
\begin{equation}
    \|u\|_{V^\gamma}:=\big{(}\|u\|^2+ \|Du\|^2\big{)}^{{1}/{2}}.
\end{equation}
For every $\overline s\in [a_0,b_\gamma]$ we define \begin{equation}\label{eq:def-spazi-t0t1}
V^{\gamma}_{\overline{s}}:=H^1(\Omega\setminus\Gamma_{\overline{s}};\mathbb{R}^2) \quad \textnormal{and} \quad V^{\gamma,D}_{\overline{s}}:=\big\{u\in V^{\gamma}_{\overline{s}}\,\,\big|\,\,u|_{\partial_D\Om}=0\big\},
\end{equation}
where $\Gamma_{\overline{s}}=\gamma([a_0,{\overline{s}}])$ and $u|_{\partial_D\Om}$ denotes the trace of $u$ on $\partial_D\Omega$. We note that $V^{\gamma}_{\overline{s}}$ and $V^{\gamma,D}_{\overline{s}}$ are closed linear subspaces of $V^{\gamma}$. For every $t\in [T_0,T_1]$ the spaces $V^{\gamma}_{{s}(t)}$ and $V^{\gamma,D}_{{s}(t)}$ are defined as in \eqref{eq:def-spazi-t0t1} with $\overline{s}=s(t)$.

We define
\begin{equation}\label{eq:Vcorsivo-t0t1}
    \mathcal{V}_{\gamma,s}(T_0,T_1):=\big\{ v\in \Ls(T_0,T_1; V^\gamma)\cap H^1(T_0,T_1;H)\,\big|\,v(t)\in V^{\gamma}_{s(t)}\!\text{  for a.e. }\!t\!\in\! (T_0,T_1) \big\},
\end{equation}
which is a Hilbert space with the norm
\begin{equation}\label{eq:norma_U}
    \|v\|_{\mathcal{V}_{\gamma,s}}:=\big{(}\|v\|^2_{\Ls(T_0,T_1;V^\gamma)}+\|\dot v\|^2_{\Ls(T_0,T_1;H)}\big{)}^{\frac{1}{2}},
\end{equation}
where the dot denotes the distibutional derivative with respect to $t$. Moreover we set
\begin{equation}\label{eq:VcorsivoD-t0t1}
     \mathcal{V}^D_{\gamma,s}(T_0,T_1):=\big\{ v\in \mathcal{V}_{\gamma,s}(T_0,T_1) \,\big|\,v(t)\in V^D_{s(t)}\,\text{  for a.e. }\,t\in (T_0,T_1) \big\},
\end{equation}
which is a closed linear subspace of $\mathcal{V}_{\gamma,s}(T_0,T_1)$ and we define
\begin{equation}\label{eq:Vcorsivo-infty-t0t1}
    \!\mathcal{V}^\infty_{\gamma,s}(T_0,T_1)\!:=\!\big\{ v\!\in\! L^\infty(T_0,T_1; V^\gamma)\cap W^{1,\infty}(T_0,T_1;H)\,\big|\,v(t)\!\in\! V^{\gamma}_{s(t)}\!\text{  for a.e. }\!t\!\in\! (T_0,T_1) \big\},
\end{equation}
which is a Banach space with the norm
\begin{equation}\label{eq:norma_Uinfty}
    \|v\|_{\mathcal{V}^\infty_{\gamma,s}}:=\|v\|_{L^\infty(T_0,T_1;V^\gamma)}+\|\dot v\|_{L^\infty(T_0,T_1;H)}.
\end{equation}
Moreover, it is convenient to introduce the space of weakly continuous functions with values in a Banach space $X$ with topological dual $X^*$, defined by
\begin{equation*}
    C^0_w([T_0,T_1]; X):=\big\{ v: [T_0,T_1] \to X \,\big|\, t \mapsto \langle h,\, v(t) \rangle \textnormal{ is continuous for every } h \in X^* \big\}.
\end{equation*}

When it is clear from the context we will omit the dependence on $\gamma$ or $s$ in the functional spaces, writing $V$, $V_{s(t)}$, $V^{D}_{s(t)}$, $\mathcal{V}(T_0,T_1)$, $\mathcal{V}^D(T_0,T_1)$, and $\mathcal{V}^\infty(T_0,T_1)$ instead of $V^\gamma$, $V^{\gamma}_{s(t)}$, $V^{\gamma,D}_{s(t)}$, $\mathcal{V}_{\gamma,s}(T_0,T_1)$, $\mathcal{V}^D_{\gamma,s}(T_0,T_1)$, and $\mathcal{V}^\infty_{\gamma,s}(T_0,T_1)$.

Since $H^1(T_0,T_1;H)\hookrightarrow C^0([T_0,T_1];H)$ we have $\mathcal{V}(T_0,T_1)\hookrightarrow C^0([T_0,T_1],H)$. In particular $v(T_0)$ and $v(T_1)$ are well defined elements of $H$, for every $v\in \mathcal{V}(T_0,T_1)$.

We set
\begin{equation}
    \tilde{H}:=L^2(\Omega;\R^{2\times 2}_{sym}).
\end{equation}
On the forcing term $\ell(t)$ of \eqref{eq:forte1} we assume that
\begin{equation}
    \ell(t):= f(t) - \dive F(t),
\end{equation}
where
\begin{equation}\label{eq:forzante-f-t0t1}
    f\in \Ls(0,T;H) \quad \text{and} \quad
    F\in H^1(0,T;\tilde{H})
\end{equation}
are prescribed functions and  the divergence of a matrix valued function is the vector valued function whose components are obtained taking the divergence of the rows.

The Dirichlet boundary condition on $\partial_D \Omega$ is obtained by prescribing a function
\begin{equation}\label{eq:dato-u_D}
    u_D\in H^2(0,T;\,H)\cap H^1(0,T;\,V_{0}).
\end{equation}
where $V_0$ is $V_{\overline{s}}$ for $\overline{s}=0$. It is not restrictive to assume that for every $t\in [0,T]$
\begin{equation}\label{eq:condizione-nulla-dentro}
    u_D(t)=0 \quad \textnormal{a.e. on } \{ x\in \Omega \,|\, \textnormal{dist}(x,\partial\Omega) \geq r \}.
\end{equation}
Moreover we will prescribe the natural Neumann boundary condition on $\partial_N\Omega \cup \Gamma_t$.

We are now in a position to give the definition of weak solution for the viscoelastic problem.

\begin{definition}[Solution for visco-elastodynamics with cracks]\label{def:weak-sol-t0t1}
Let $\gamma\in \mathcal{G}_{r,L}$, $0\leq T_0 < T_1 \leq T$, $s \in \mathcal{S}^{piec}_{\mu,M}(T_0,T_1)$, with $s(T_1)\leq b_\gamma$, and assume \eqref{eq:C-V}, \eqref{eq:forzante-f-t0t1}-\eqref{eq:condizione-nulla-dentro}.  Let
$ u^{0} \in V_{s(T_0)}$, such that $u^{0}-u_D(T_0)\in V^D_{s(T_0)}$ and let $u^{1}\in H$. We say that $u$ is a weak solution of the problem of visco-elastodynamics on the cracked domains $\Omega\setminus\Gamma_{s(t)}$, $t\in[T_0,T_1]$, with initial conditions $u^{0}$ and $u^{1}$, if
\begin{align}
    &\label{eq:weak_generalizzata1-t0t1} \,\,\,u\in\mathcal{V}(T_0,T_1) \quad \text{and} \quad u-u_D\in \mathcal{V}^D(T_0,T_1),\\
    &-\int^{T_1}_{T_0}(\dot u(t),\dot \varphi(t))\,\dt+ \int^{T_1}_{T_0} (\A Eu(t),E\varphi(t)) \,\dt \vspace{1cm}\nonumber \\
    & - \int^{T_1}_{T_0}\int^t_{T_0}  \textnormal{e}^{\tau-t} (\V Eu(\tau), E\varphi(t))\,\dtau\dt = \int^{T_1}_{T_0} (f(t),\varphi(t))\,\dt{} \nonumber\\
    &\label{eq:weak_generalizzata2-t0t1} + \int^{T_1}_{T_0}  (F(t),E\varphi(t))  \,\dt{} \,\,\,\text{ for all }  \varphi\in \mathcal{V}^D(T_0,T_1) \text{ with }\varphi(T_0)=\varphi(T_1)=0,\\
    & \label{eq:weak_generalizzata3-t0t1}\,\,\,u(T_0)=u^{0} \quad \text{in }  H \quad  \text{and} \quad \dot u(T_1)=u^{1} \quad \text{in } (V^D_{s(T_0)})^*,
\end{align}
where $(V^D_{s(T_0)})^*$ denotes the topological dual of $V^D_{s(T_0)}$.
\end{definition}
\begin{remark}
 If $u$ satisfy \eqref{eq:weak_generalizzata1-t0t1} and \eqref{eq:weak_generalizzata2-t0t1}, it is possible to prove that $\dot u\in H^1(0,T;(V^D_{s(T_0)})^*)$ (see \cite[Remark 4.6]{Sapio}), which implies $\dot u\in C^0([T_0,T_1];(V^D_{s(T_0)})^*)$. In particular $\dot u(T_0)$ is well defined as an element of $(V^D_{s(T_0)})^*$.
\end{remark}

\begin{remark}
 In the case of smooth functions problem \eqref{eq:weak_generalizzata1-t0t1}-\eqref{eq:weak_generalizzata3-t0t1} is satisfied in a stronger sense. Namely, $u$ and $\{\Gamma_{s(t)}\}_{t\in [T_0,T_1]}$ satisfy
\begin{alignat}{2}
    & \ddot{u}(t) - \dive\big{(}(\C + \V) Eu(t)\big{)} + \dive\Big{(} \int^t_{T_0} \textnormal{e}^{\tau-t}\,\V Eu(\tau) \, \textnormal{d}\tau \Big{)} = \ell(t) \qquad &&\textnormal{in } \Om\setminus\Gamma_{s(t)},\label{eq:forte1ch2}\\
    &u(t) = u_D(t) \qquad && \text{on $\partial_D\Om$,} \label{eq:forte2ch2} \\
    &\Big{(}(\C + \V) Eu(t)  -  \int^t_{T_0} \textnormal{e}^{\tau-t}\,\V Eu(\tau) \, \textnormal{d}\tau \Big{)}\nu=F(t)\nu\qquad && \text{on $\partial_N\Om$,}\\
    &\Big{(}(\C + \V) Eu(t)  -  \int^t_{T_0} \textnormal{e}^{\tau-t}\,\V Eu(\tau) \, \textnormal{d}\tau \Big{)}^{\!\pm}\!\!\!\nu=F(t)^\pm\nu\qquad && \text{on $\Gamma_{s(t)}$,}\label{eq:bcNCrackch2}\\
    &u(T_0)=u^0\quad\textnormal{and}\quad\dot u(T_0)=u^1\quad && {}\label{eq:forte5ch2} 
    \end{alignat}
    for every $t\in[T_0,T_1]$, where $\ell(t):=f(t)-\dive F(t)$, $\nu$ is the unit normal, and the symbol $\pm$ in \eqref{eq:bcNCrackch2} denotes suitable limits on each side of $\Gamma_{s(t)}$.
\end{remark}

Existence of the solution for the viscoelastic problem \eqref{eq:weak_generalizzata1-t0t1}-\eqref{eq:weak_generalizzata3-t0t1} is given by \cite{Sapio} for $\Omega \subset \R^d$ with $d\geq 1$ and under more general assumptions on the regularity of the cracks. Uniqueness and continuous dependence on the data are proved in \cite{Cianci-Dalmaso} under the assumption that the constant $\mu$, which controls the speed of the crack tip in Definition \ref{def:lunghezze}, satisfies
\begin{equation}\label{eq:vecchio-controllo}
    0< \mu < \mu_0,
\end{equation}
where the constant $\mu_0$ is not explicitly defined in terms of the data of the problem. Using the fact that $d=2$ in our work, we will prove that uniqueness and continuous dependence can be obtained under the explicit assumption
\begin{equation}\label{eq:controllo-velocità}
    0 < \mu < \sqrt{\lambda}/2,
\end{equation}
where $\lambda$ are the constants that appears in Defintion \ref{def:spazio-tensori} respectively.

In order to prove this results, we have to define an auxiliary problem, which can be interpreted as the elastodynamics problem with elasticity tensor replaced by $\A$.

\begin{definition}[Solution for elastodynamics with cracks]\label{def:eq-onder-t0t1}
 Let $\gamma\in \mathcal{G}_{r,L}$, $0\leq T_0 < T_1 \leq T$, $s \in \mathcal{S}^{piec}_{\mu,M}(T_0,T_1)$, with $s(T_1)\leq b_\gamma$, and assume \eqref{eq:C-V}, \eqref{eq:forzante-f-t0t1}-\eqref{eq:condizione-nulla-dentro}.  Let
$ u^{0} \in V_{s(T_0)}$, such that $u^{0}-u_D(T_0)\in V^D_{s(T_0)}$ and let $u^{1}\in H$. We say that $v$ is a weak solution of the problem of elastodynamics on the cracked domains $\Omega\setminus\Gamma_{s(t)}$, $t\in[T_0,T_1]$, with initial conditions $u^{0}$ and $u^{1}$, if
\begin{align}
    &\label{eq:onde1-t0t1} \,\,\,v\in\mathcal{V}(T_0,T_1) \quad \text{and} \quad v-u_D\in \mathcal{V}^D(T_0,T_1),\\
    &-\int^{T_1}_{T_0}(\dot v(t),\dot \varphi(t))\,\dt+ \int^{T_1}_{T_0} (\A Ev(t),E\varphi(t)) \,\dt = \int^{T_1}_{T_0} (f(t),\varphi(t))\,\dt{} \vspace{1cm}\nonumber \\
    &\label{eq:onde2-t0t1} + \int^{T_1}_{T_0}  (F(t),E\varphi(t))  \,\dt{}  \quad\text{for all }  \varphi\in \mathcal{V}^D(T_0,T_1)  \text{ with } \varphi(T_0)=\varphi(T_1)=0,\\
    & \label{eq:onde3-t0t1}\,\,\,v(T_0)=u^{0} \quad \text{in }  H \quad  \text{and} \quad \dot v(T_1)=u^{1} \quad \text{in } (V^D_{s(T_0)})^*,
\end{align}
\end{definition}

\begin{remark}
 In the case of smooth functions problem \eqref{eq:onde1-t0t1}-\eqref{eq:onde3-t0t1} is satisfied in a stronger sense. Namely, $v$ and $\{\Gamma_{s(t)}\}_{t\in [T_0,T_1]}$ satisfy
\begin{alignat}{2}
    & \ddot{v}(t) - \dive\big{(}\A Ev(t)\big{)} = \ell(t) \qquad &&\textnormal{in } \Om\setminus\Gamma_{s(t)},\label{eq:forte1ch2onde}\\
    &v(t) = u_D(t) \qquad && \text{on $\partial_D\Om$,} \label{eq:forte2ch2onde} \\
    &(\A Ev(t)  )\nu=F(t)\nu\qquad && \text{on $\partial_N\Om$,}\\
    &(\A Ev(t)  )^\pm\nu=F(t)^\pm\nu\qquad && \text{on $\Gamma_{s(t)}$,}\label{eq:bcNCrackch2onde}\\
    &v(T_0)=u^0\quad\textnormal{and}\quad\dot v(T_0)=u^1\quad && {}\label{eq:forte5ch2onde} 
    \end{alignat}
    for every $t\in[T_0,T_1]$, where $\ell(t):=f(t)-\dive F(t)$, $\nu$ is the unit normal, and the symbol $\pm$ in \eqref{eq:bcNCrackch2onde} denotes suitable limits on each side of $\Gamma_{s(t)}$.
\end{remark}

Existence and uniqueness for the system of elastodynamics with cracks \eqref{eq:onde1-t0t1}-\eqref{eq:onde3-t0t1} under the assumption \eqref{eq:controllo-velocità} is given by \cite{Dalmaso-Larsen-Toader}, where the authors consider a slight different formulation of the problem which is stronger in time. The proof, which is based on a localization argument, works also for the formulation given in Definition \ref{def:eq-onder-t0t1}. Then we can state the following result.

\begin{theorem}\label{thm:ex-uniq-onde-t0t1}
Let $\gamma\in \mathcal{G}_{r,L}$, $0\leq T_0 < T_1 \leq T$, $s \in \mathcal{S}^{piec}_{\mu,M}(T_0,T_1)$, with $s(T_1)\leq b_\gamma$, and assume \eqref{eq:C-V}, \eqref{eq:forzante-f-t0t1}-\eqref{eq:condizione-nulla-dentro} and \eqref{eq:controllo-velocità}. Let
$ u^{0} \in V_{s(T_0)}$, such that $u^{0}-u_D(T_0)\in V^D_{s(T_0)}$ and let $u^{1}\in H$. Then there exists a unique solution $v$ of problem \eqref{eq:onde1-t0t1}-\eqref{eq:onde3-t0t1}. Moreover $ v \in \mathcal{V}^\infty(T_0,T_1)$, $v\in C^0_w([T_0,T_1];V)$, and $\dot v\in C^0_w([T_0,T_1];H)$.
\end{theorem}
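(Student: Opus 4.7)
The strategy is the localization argument of \cite{Dalmaso-Larsen-Toader}, adapted to the (weaker in time) formulation of Definition \ref{def:eq-onder-t0t1}. First, splitting $[T_0,T_1]$ at the finitely many points of $sing(s)$ reduces matters to the regular case $s\in\mathcal{S}^{reg}_{\mu,M}(T_0,T_1)$; consecutive solutions are then glued at the interfaces, with compatibility automatic because the candidate $v$ is continuous into $H$ and $\dot v$ is continuous into $(V^D_{s(T_0)})^*$. Next I subdivide $[T_0,T_1]$ into pieces $[t_0,t_1]$ of length $\le\delta$, where $\delta$ comes from Lemma \ref{lemma:costruzione-diffeom} applied with a small $\varepsilon>0$ and some $\rho<r$; by condition (d) of Definition \ref{def:spazioGrL}, the ball $B(\gamma(s(t_0)),2\rho)$ is disjoint from $\partial\Omega$, so $\Phi$ is the identity near $\partial\Omega$ and the Dirichlet datum $u_D$ is preserved under the upcoming change of variables.

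On each $[t_0,t_1]$ the pull-back $w(t,y):=v(t,\Phi(t,y))$ of a prospective solution satisfies, on the \emph{fixed} cracked domain $\Omega\setminus\Gamma_{s(t_0)}$, a hyperbolic system whose principal part has $C^2$ coefficients uniformly bounded by the constant of item (f) of the lemma, together with a first-order term generated by $\partial_t\Phi$ and a right-hand side obtained from $f$ and $F$ by the usual change of variables. Existence for this fixed-domain problem follows from a Galerkin scheme based on an orthonormal basis of $V^D_{s(t_0)}$: uniform bounds on the energy of the Galerkin approximants give $L^\infty(t_0,t_1;V)$ control of $w$ and $L^\infty(t_0,t_1;H)$ control of $\dot w$, so the limit lies in $\mathcal{V}^\infty(t_0,t_1)$ and, after pushing forward by $\Phi$, yields a solution $v$ in the sense of Definition \ref{def:eq-onder-t0t1}.

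The decisive step, and the main obstacle, is closing this energy estimate under the explicit bound $\mu<\sqrt{\lambda}/2$. Differentiating
\[
\mathcal{E}(t):=\tfrac12\|\dot w(t)\|^2+\tfrac12(\A Ew(t),Ew(t))
\]
and testing with $\dot w$ in the equation for $w$, the principal terms cancel; one is left with the forcing contributions (controlled by Gronwall) together with a cross term of the schematic form $\int_\Omega \partial_t\Phi\cdot Dw\cdot \dot w\,\dx$, inherited from the change of variables. Bounding $|\partial_t\Phi|\le\mu(1+\varepsilon)$ via item (e) of Lemma \ref{lemma:costruzione-diffeom}, applying Cauchy--Schwarz, and using $\lambda\|Ew\|^2\le(\A Ew,Ew)$ together with Korn's inequality on $\Omega\setminus\Gamma_{s(t_0)}$, this term can be absorbed in $\mathcal{E}$ precisely when $2\mu(1+\varepsilon)<\sqrt{\lambda}$, which holds for sufficiently small $\varepsilon$ thanks to \eqref{eq:controllo-velocità}. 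Applying the same inequality to the difference of two solutions with zero data gives uniqueness, and one marches forward over $[T_0,T_1]$ in finitely many steps of length $\delta$.

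It remains to upgrade the regularity to $v\in C^0_w([T_0,T_1];V)$ and $\dot v\in C^0_w([T_0,T_1];H)$. From \eqref{eq:onde2-t0t1} one reads off that $\ddot v\in L^2(T_0,T_1;(V^D_{s(T_0)})^*)$, so $\dot v$ is (strongly) continuous into that dual space and $v$ is continuous into $H$; combined with the $L^\infty$ bounds in $H$ and $V$ respectively, a standard density argument upgrades these to weak continuity in the stronger spaces.
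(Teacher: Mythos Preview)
Your proposal follows the same route as the paper. The paper does not prove the theorem directly: the paragraph preceding the statement attributes it to \cite{Dalmaso-Larsen-Toader}, noting only that their localization argument carries over to the weaker-in-time formulation of Definition \ref{def:eq-onder-t0t1}. Your sketch (reduce to $s\in\mathcal S^{reg}_{\mu,M}$, apply Lemma \ref{lemma:costruzione-diffeom}, pull back to a fixed cracked domain, Galerkin scheme, energy estimate, glue over subintervals) is exactly that strategy, and the weak-continuity upgrade at the end is standard and correct.

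One point deserves care, because it is where the explicit threshold \eqref{eq:controllo-velocità} must appear. You write the energy for the pulled-back $w$ as $\tfrac12\|\dot w\|^2+\tfrac12(\A Ew,Ew)$ and claim the cross term $\int_\Omega\partial_t\Phi\cdot Dw\cdot\dot w\,\dx$ is absorbed ``precisely when $2\mu(1+\varepsilon)<\sqrt\lambda$'' via Korn's inequality on $\Omega\setminus\Gamma_{s(t_0)}$. Two objections. First, after pull-back the principal bilinear form is a $D\Phi$-twisted version of $(\A E\cdot,E\cdot)$, not the untwisted one you differentiate, so the cancellation you invoke does not occur with that choice of $\mathcal E$. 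Second, Korn on the cracked domain carries a constant depending on $\Omega$ and $\Gamma_{s(t_0)}$; the threshold your estimate actually yields involves that constant and is not the domain-independent $\sqrt\lambda/2$. For the Galerkin a~priori bounds this is harmless (any Gronwall constant suffices), but for uniqueness you cannot test the difference of two weak solutions with its own time derivative, which lies only in $H$, so the formal argument you propose is not justified at that stage. In \cite{Dalmaso-Larsen-Toader} the bound \eqref{eq:controllo-velocità} arises from the structure of the transformed hyperbolic problem rather than from a Korn-type estimate; since the present paper delegates all details to that reference, your outline is acceptable as a roadmap, but the specific absorption mechanism you describe is not the one that produces the constant $\sqrt\lambda/2$.
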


With the following result we obtain a better regularity with respect to time.

\begin{proposition}\label{rem:equivalenza}
Under the same assumption of Theorem \ref{thm:ex-uniq-onde-t0t1}, let $v$ be the unique solution of problem \eqref{eq:onde1-t0t1}-\eqref{eq:onde3-t0t1}. Then $v\in C^0([T_0,T_1], V) \cap C^1([T_0,T_1], H)$.
\end{proposition}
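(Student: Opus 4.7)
The plan is to promote the weak continuity of $v$ in $V$ and of $\dot v$ in $H$, already provided by Theorem \ref{thm:ex-uniq-onde-t0t1}, to strong continuity by means of the Hilbert-space fact that weak convergence combined with norm convergence implies strong convergence. Since $\mathcal V(T_0,T_1)\hookrightarrow C^0([T_0,T_1];H)$, the map $t\mapsto \|v(t)\|_H$ is automatically continuous, so it suffices to prove that the energy
\[
\mathcal E(t) := \tfrac12\|\dot v(t)\|^2 + \tfrac12(\A Ev(t), Ev(t))
\]
is continuous on $[T_0,T_1]$; coupled with Korn's inequality and the coercivity of $\A$, this gives norm-continuity of $(\dot v(t), v(t))$ in $H\times V$, and weak-plus-norm convergence then yields the strong continuity claimed in the proposition.

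To prove continuity of $\mathcal E$ I would fix $t_0\in[T_0,T_1]$ and apply Lemma \ref{lemma:costruzione-diffeom} on a small neighbourhood $I$ of $t_0$, producing diffeomorphisms $\Phi(t,\cdot),\Psi(t,\cdot)$ of $\overline\Om$ with $\Phi(t,\Gamma_{s(t_0)})=\Gamma_{s(t)}$ and $\Phi(t_0,\cdot)=\mathrm{id}$ (for an interior $t_0$ one combines the one-sided statement of the lemma on $[t_0,t_0+\delta]$ with its reflected analogue on $[t_0-\delta,t_0]$). Setting $w(t,y):=v(t,\Phi(t,y))$, the pulled-back field $w$ takes values in the \emph{fixed} space $V_{s(t_0)}$, lies in $L^\infty(I;V_{s(t_0)})\cap W^{1,\infty}(I;H)$, and satisfies a linear second-order hyperbolic system on $\Om\setminus\Gamma_{s(t_0)}$ of the form $\ddot w - \dive(\B(t)Ew)+(\text{lower order in }w,\dot w)=\tilde\ell$, where $\B(t)$ is a tensor of class $C^{1,1}$ in $t$ obtained from $\A$ and the Jacobian of $\Phi$. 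Because the test space is now time-independent, one can use Steklov averages of $w$ in time as admissible test functions and pass to the limit, obtaining a genuine energy identity for $w$ on $I$. This makes the transformed energy $\mathcal E_w(t):=\tfrac12\|\dot w(t)\|^2+\tfrac12(\B(t)Ew(t),Ew(t))$ continuous on $I$; together with the weak continuity of $w$ in $V_{s(t_0)}$ and of $\dot w$ in $H$ inherited from $v,\dot v$ via the smoothness of $\Phi$, weak-plus-norm convergence yields $w\in C^0(I;V_{s(t_0)})$ and $\dot w\in C^0(I;H)$.

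Finally I would pull back via $v(t,x)=w(t,\Psi(t,x))$, using the uniform $C^{2,1}$ bounds on $\Phi,\Psi$ of Lemma \ref{lemma:costruzione-diffeom}(f) and the bound $|\partial_t\Phi|\le\mu(1+\varepsilon)$ from (e), to estimate
\[
\|v(t)-v(t_0)\|_V \leq C\|w(t)-w(t_0)\|_{V_{s(t_0)}}+o(1),\qquad \|\dot v(t)-\dot v(t_0)\|_H \leq C\|\dot w(t)-\dot w(t_0)\|_H + o(1),
\]
as $t\to t_0$, transferring the strong continuity to $v,\dot v$. The main technical obstacle is the rigorous justification of the energy identity for $w$: the naive test $\varphi=\dot w$ is not admissible because $\dot w\in L^\infty(I;H)$ but not in $\mathcal V^D(I)$, and one must use time mollifications, carefully controlling the commutators with the coefficients $\B(t)$ and the transformed forcing. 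This is, however, a standard fixed-domain computation already carried out in \cite{Sapio} and in the elastodynamic part of \cite{Dalmaso-Larsen-Toader}, so the remaining change-of-variable bookkeeping is routine given the quantitative bounds in Lemma \ref{lemma:costruzione-diffeom}.
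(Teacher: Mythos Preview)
Your approach is correct but differs from the paper's. The paper argues by reduction and approximation: for $F=0$ it observes that a solution in the (time-stronger) sense of \cite{Dalmaso-Larsen-Toader}, which is already known to lie in $C^0([T_0,T_1],V)\cap C^1([T_0,T_1],H)$, is also a weak solution in the sense of Definition~\ref{def:eq-onder-t0t1} and hence coincides with $v$ by uniqueness; for general $F$ it approximates $F$ in $H^1(0,T;\tilde H)$ by $F_\varepsilon$ with $F_\varepsilon(t)\in C^\infty_c(\Omega\setminus\Gamma;\R^{2\times2}_{sym})$, absorbs $-\dive F_\varepsilon$ into $f$ to fall back on the $F=0$ case, and then passes to the limit using the continuous-dependence estimate of \cite[Proposition~4.5]{Cianci-Dalmaso} to transfer the $C^0$/$C^1$ regularity to $v$. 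Your route---pull back via the diffeomorphisms of Lemma~\ref{lemma:costruzione-diffeom} to a fixed cracked domain, derive an energy identity for the transformed field by time mollification, and then upgrade weak to strong continuity via norm convergence---is essentially the machinery that \emph{underlies} the regularity statement in \cite{Dalmaso-Larsen-Toader} that the paper simply invokes. Your argument is more self-contained and handles a general $F$ directly inside the transformed energy identity; the paper's argument is much shorter because it outsources the fixed-domain computation to the cited references and treats the $F\neq0$ case by a short approximation plus stability step.
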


\begin{proof}
In the case $F=0$, a solution for the elastodynamics with cracks in the sense of \cite{Dalmaso-Larsen-Toader} is also a solution in the sense of Definition \ref{def:eq-onder-t0t1}. By uniqueness, the two solutions coincide. In particular, we get that, if $F=0$, the solution is in $C^0([T_0,T_1], V) \cap C^1([T_0,T_1], H)$.

If the forcing term $F$ is not zero, we can use same approximation argument used in \cite[Lemma 5.7]{Cianci-Dalmaso}. Then for every $\varepsilon >0$ there exists $F_\varepsilon \in H^1(0,T, \tilde{H})$ such that $F_\varepsilon(t)\in C^\infty_c(\Omega\setminus \Gamma;\,\mathbb{R}^{d\times d}_{sym})$ for every $t\in [0,T]$ and
\begin{equation}\label{eq:F_eps_to_F-t0t1}
\|F_\varepsilon - F\|_{L^\infty(0,T;\tilde{H})} + \|\dot F_\varepsilon - \dot F\|_{L^2(0,T;\tilde{H})} < \varepsilon.
\end{equation}
We define $v_\varepsilon$ as the solution of the elastodynamic problem in Definition \ref{def:eq-onder-t0t1} with $F$ replaced by $F_\varepsilon$. Since $F_\varepsilon$ is regular in space we have that
\begin{equation}
    (F_\varepsilon(t),E\psi)=-(\dive F_\varepsilon(t),\psi)
\end{equation}
for all $t\in [0,T]$ and for all $\psi\in V$. It follows that $v_\varepsilon$ is a solution in the sense of Definition \ref{def:eq-onder-t0t1} with $f$ and $F$ respectively replaced by $f-\dive F_\varepsilon$ and $0$. By the results of \cite{Dalmaso-Larsen-Toader} we have that $v_\varepsilon\in C^0([T_0,T_1], V) \cap C^1([T_0,T_1], H)$. Using the continuous dependence on the forcing terms given by \cite[Proposition 4.5]{Cianci-Dalmaso} and \eqref{eq:F_eps_to_F-t0t1}, we obtain that
\begin{equation*}
    \sup_{t \in [0,T]}\|v_\varepsilon(t)- v(t)\|_V + \sup_{t \in [0,T]}\| \dot{v}_\varepsilon(t) - \dot{v}(t) \| \to 0 \quad \textnormal{as } \varepsilon \to 0.
\end{equation*}
In particular, we get that $v\in C^0([T_0,T_1], V) \cap C^1([T_0,T_1], H)$.
\end{proof}

We now fix the notation that will be useful in order to give the main results concerning continuous dependence on the data.\vspace{0,3 cm}

Let $0\leq T_0 < T_1 \leq T$, let $\gamma_k\in \mathcal{G}_{r,L}$ be a sequence of cracks paths, and let $s_k \in \mathcal{S}^{piec}_{\mu,M}(T_0,T_1)$, with $s_k(T_1)\leq b_{\gamma_k}$, be a sequence of crack lengths, we define $V^{\gamma_k}$, $\|\cdot\|_{V^{\gamma_k}}$, $V^{\gamma_k}_{s_k(t)}$, $V^{{\gamma_k},D}_{s_k(t)}$, $\mathcal{V}_{\gamma_k,s_k}(T_0,T_1)$, $\|\cdot\|_{\mathcal{V}_{\gamma_k.s_k}}$, $\mathcal{V}^{D}_{\gamma_k,s_k}(T_0,T_1)$ as in \eqref{eq:VeH-t0t1}-\eqref{eq:VcorsivoD-t0t1} with $\Gamma$ and $\Gamma_{s(t)}$ replaced by $\Gamma^{\gamma_k}:=\gamma_k([a_0,b_{\gamma_k}])$ and $\Gamma^{\gamma_k}_{s_k(t)}:=\gamma_k([a_0,s_k(t)]).$

Let $u^0_k\in V^{\gamma_k}_{s_k(T_0)}$, with $u^{0}_k-u_D(T_0)\in V^{\gamma_k,D}_{s_k(T_0)}$, $u^1_k,\in H$,
\begin{equation}\label{eq:fk-Fk}
f_k\in L^2(0,T; H) \quad\textnormal{and}\quad F_k \in H^1(0,T; \tilde{H}).
\end{equation}

We define $u_k$ as the weak solution of $k$-th viscoelastic problem on the cracked domains $\Omega\setminus\Gamma^{\gamma_k}_{s_k(t)}$, $t\in[T_0,T_1]$, that is
\begin{align}
    &\label{eq:visco1-n-t0t1} \,\,\,u_k\in\mathcal\mathcal{V}_{\gamma_k,s_k}(T_0,T_1) \quad \text{and} \quad u_k-u_D\in \mathcal{V}^D_{\gamma_k,s_k}(T_0,T_1),\\
    &-\int^{T_1}_{T_0}(\dot u_k(t),\dot \varphi(t))\,\dt+ \int^{T_1}_{T_0} (\A Eu_k(t),E\varphi(t)) \,\dt \nonumber\\
    &- \int^{T_1}_{T_0}\int^t_{T_0}  \textnormal{e}^{\tau-t} (\V Eu_k(\tau), E\varphi(t))\,\dtau\dt = \int^{T_1}_{T_0} (f_k(t),\varphi(t))\,\dt{}\nonumber\\
    &+ \int^{T_1}_{T_0}  (F_k(t),E\varphi(t))  \,\dt{}\label{eq:visco2-n-t0t1}  \quad\text{for all }  \varphi\in \mathcal{V}^D_{\gamma_k,s_k}(T_0,T_1)  \text{ with } \varphi(T_0)=\varphi(T_1)=0,\\
    & \label{eq:visco3-n-t0t1}\,\,\,u_k(T_0)=u^0_k \quad \text{in }  H \quad  \text{and} \quad \dot u_k(T_1)=u^1_k \quad \text{in } (V^{\gamma_k,D}_{s_k(T_0)})^*.
\end{align}

Moreover, we define $v_k$ as the weak solution of $k$-th problem of elastodynamics on the cracked domains $\Omega\setminus\Gamma^{\gamma_k}_{s_k(t)}$, $t\in[T_0,T_1]$, that is
\begin{align}
    &\label{eq:onde1-n-t0t1} \,\,\,v_k\in\mathcal{V}_{\gamma_k,s_k}(T_0,T_1) \quad \text{and} \quad v_k-u_D\in \mathcal{V}^D_{\gamma_k,s_k}(T_0,T_1),\\
    &-\int^{T_1}_{T_0}(\dot v_k(t),\dot \varphi(t))\,\dt+ \int^{T_1}_{T_0} (\A Ev_k(t),E\varphi(t)) \,\dt = \int^{T_1}_{T_0} (f_k(t),\varphi(t))\,\dt{} \vspace{1cm}\nonumber \\
    &\label{eq:onde2-n-t0t1} + \int^{T_1}_{T_0}  (F_k(t),E\varphi(t))  \,\dt{}  \quad\text{for all }  \varphi\in \mathcal{V}_{\gamma_k,s_k}(T_0,T_1)  \text{ with } \varphi(T_0)=\varphi(T_1)=0,\\
    & \label{eq:onde3-n-t0t1}\,\,\,v_k(T_0)=u^0_k \quad \text{in }  H \quad  \text{and} \quad \dot v_k(T_1)=u^1_k \quad \text{in } (V^{\gamma_k,D}_{s_k(T_0)})^*.
\end{align}

We now state the result concernig continuous dependence on the data for the problem of elastodynamics. It will be used to prove the same result for the viscoelastic problem.

\begin{theorem}\label{thm:dip-cont-fract+c.i.-t0t1} Let $\gamma\in \mathcal{G}_{r,L}$, $0\leq T_0 < T_1 \leq T$, $s \in \mathcal{S}^{piec}_{\mu,M}(T_0,T_1)$, with $s(T_1)\leq b_\gamma$, and assume \eqref{eq:C-V}, \eqref{eq:forzante-f-t0t1}-\eqref{eq:condizione-nulla-dentro} and \eqref{eq:controllo-velocità}. Let $ u^{0} \in V^\gamma_{s(T_0)}$, with $u^{0}-u_D(T_0)\in V^{\gamma,D}_{s(T_0)}$ and let $u^{1}\in H$.
Let $\gamma_k\in \mathcal{G}_{r,L}$, let $s_k \in \mathcal{S}^{piec}_{\mu,M}(T_0,T_1)$, with $s_k(T_1)\leq b_{\gamma_k}$. Let $u^0_k\in V^{\gamma_k}_{s_k(T_0)}$, with $u^{0}_k-u_D(T_0)\in V^{\gamma_k,D}_{s_k(T_0)}$, $u^1_k,\in H$, and assume \eqref{eq:fk-Fk}. Let $v$ be the weak solution of problem \eqref{eq:onde1-t0t1}-\eqref{eq:onde3-t0t1} on the cracked domains $\Omega\setminus\Gamma^{\gamma}_{s(t)}$, $t\in[T_0,T_1]$. Let $v_k$ the weak solution problem \eqref{eq:onde1-n-t0t1}-\eqref{eq:onde3-n-t0t1} on the cracked domains $\Omega\setminus\Gamma^{\gamma_k}_{s_k(t)}$, $t\in[T_0,T_1]$. Assume that
\begin{equation}\label{eq:conf-forz-t0t1}
    \|f_k - f\|_{\Ls(0,T;H)}\to 0,
\qquad
    \|F_k - F\|_{ H^1(0,T;\tilde{H})}\to 0,
\end{equation}
\begin{equation}
    s_k \to s \quad \textnormal{uniformly},
\qquad
    \gamma_k \to \gamma \quad \textnormal{uniformly},
\end{equation}
\begin{equation}
    u^0_k \to u^0 \quad \textnormal{in } H,
\qquad
    Du^0_k \to Du^0 \quad \textnormal{in } \underline{H},
\qquad
    u^1_k \to u^1 \quad \textnormal{in } H.
\end{equation}
Then
\begin{equation}
    v_k(t) \to v(t) \quad \textnormal{in } {H},
\end{equation}
\begin{equation}
     Dv_k(t) \to Dv(t) \quad \textnormal{in } \underline{H},
\end{equation}
\begin{equation}
    \dot v_k(t) \to \dot v(t) \quad \textnormal{in } {H},
\end{equation}
for every $t\in [T_0,T_1]$.
\end{theorem}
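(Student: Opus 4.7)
My plan is to adapt the continuous-dependence argument of \cite{Dalmaso-Larsen-Toader} for pure elastodynamics to the formulation of Definition \ref{def:eq-onder-t0t1}. I would combine three ingredients: uniform energy estimates in $\mathcal{V}^\infty$, a weak compactness plus uniqueness argument built on the diffeomorphisms of Lemma \ref{lemma:costruzione-diffeom}, and an energy-balance argument to upgrade weak to strong convergence at every time.

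\emph{Step 1 (uniform energy bounds).} First I would test \eqref{eq:onde1-n-t0t1}-\eqref{eq:onde3-n-t0t1} against $\dot v_k-\dot u_D$ (made rigorous by a time-mollification argument as in \cite{Sapio,Cianci-Dalmaso}), use the coercivity of $\A$ provided by \eqref{eq:C-V}, Korn's inequality uniform on $\Om\setminus\Gamma^{\gamma_k}$ (which follows from $\gamma_k\in\mathcal{G}_{r,L}$ with fixed $r,L$), and the speed bound $\mu<\sqrt\lambda/2$ of \eqref{eq:controllo-velocità} to absorb the boundary terms at the crack tip. With a Gronwall step this yields
\begin{equation*}
\|v_k\|_{L^\infty(T_0,T_1;V^{\gamma_k})}+\|\dot v_k\|_{L^\infty(T_0,T_1;H)}\le C
\end{equation*}
uniformly in $k$.

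\emph{Step 2 (weak limit and identification).} By Banach-Alaoglu I would extract a subsequence with $v_k\overset{*}{\rightharpoonup}v^*$ in $L^\infty(T_0,T_1;V)$ and $\dot v_k\overset{*}{\rightharpoonup}\dot v^*$ in $L^\infty(T_0,T_1;H)$; uniform convergence $\gamma_k\to\gamma$, $s_k\to s$ together with the trace condition $v_k(t)=0$ on $\Gamma^{\gamma_k}_{s_k(t)}$ forces $v^*(t)\in V^{\gamma}_{s(t)}$ for a.e. $t$, so $v^*\in\mathcal{V}_{\gamma,s}(T_0,T_1)$. To pass to the limit in \eqref{eq:onde2-n-t0t1} with a test $\varphi\in\mathcal{V}^D_{\gamma,s}(T_0,T_1)$ I would partition $[T_0,T_1]$ into finitely many subintervals of length $\le\delta$ (with $\delta$ from Lemma \ref{lemma:costruzione-diffeom}, taking $sing(s)$ into the partition), and on each subinterval construct the diffeomorphisms $\Phi,\Phi_k,\Psi_k$ and set $\varphi_k(t,x):=\varphi(t,\Phi(t,\Psi_k(t,x)))$. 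By properties (f)-(g) of Lemma \ref{lemma:costruzione-diffeom} these transported test functions lie in $\mathcal{V}^D_{\gamma_k,s_k}$ and satisfy $\varphi_k\to\varphi$ strongly in the natural norm. Substituting $\varphi_k$ in \eqref{eq:onde2-n-t0t1}, and using \eqref{eq:conf-forz-t0t1} together with the weak convergence of $v_k,\dot v_k$, I would obtain that $v^*$ solves \eqref{eq:onde1-t0t1}-\eqref{eq:onde3-t0t1}; Theorem \ref{thm:ex-uniq-onde-t0t1} then gives $v^*=v$ and the whole sequence converges.

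\emph{Step 3 (strong convergence at every time).} To upgrade to pointwise strong convergence I would use the energy identity
\begin{equation*}
\tfrac12\|\dot v_k(t)\|^2+\tfrac12(\A Ev_k(t),Ev_k(t))=\tfrac12\|u^1_k\|^2+\tfrac12(\A Eu^0_k,Eu^0_k)+\mathcal{W}_k(t),
\end{equation*}
valid for every $t\in[T_0,T_1]$ by Proposition \ref{rem:equivalenza}, where $\mathcal{W}_k(t)$ collects the work of $f_k,F_k$ and of the Dirichlet datum $u_D$ (no crack-dissipation term is present in the pure elastodynamic problem). The data convergences give $\mathcal{W}_k(t)\to\mathcal{W}(t)$ and, combined with weak lower semicontinuity on the left, they force equality in the limit, hence strong convergence in $H\times\Hm$ of $(\dot v_k(t),Ev_k(t))$. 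An Aubin-Lions argument provides $v_k(t)\to v(t)$ in $H$, and a second Korn inequality on $\Om\setminus\Gamma^\gamma_{s(t)}$ then lifts the convergence to $Dv_k(t)\to Dv(t)$ in $\Hm$.

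\emph{Main obstacle.} The hardest step is Step 2: test functions live in the $k$-dependent space $\mathcal{V}^D_{\gamma_k,s_k}$, and transporting them via Lemma \ref{lemma:costruzione-diffeom} requires careful control of the derivatives of $\Phi_k,\Psi_k$ and of their convergence; moreover, because $\delta$ is small, the argument has to be iterated on a finite partition of $[T_0,T_1]$, using the pointwise-in-time convergences at the end of each subinterval as new initial data for the next, which is why Step 3 must actually be interleaved with Step 2 on each subinterval rather than performed only at the end.
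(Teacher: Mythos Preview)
Your Steps 1--2 are a reasonable sketch of the strategy underlying \cite[Theorem~3.5]{Dalmaso-Larsen-Toader}, and indeed the paper's own proof simply invokes that result (for the case $F_k=F=0$, $f_k=f$) and then reduces the general case to it by the smoothing argument of \cite[Lemma~5.7, Proposition~5.9]{Cianci-Dalmaso}. So you are essentially re-proving the cited theorem rather than citing it.

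However, Step~3 contains a genuine gap. The energy identity you write,
\[
\tfrac12\|\dot v_k(t)\|^2+\tfrac12(\A Ev_k(t),Ev_k(t))=\tfrac12\|u^1_k\|^2+\tfrac12(\A Eu^0_k,Eu^0_k)+\mathcal{W}_k(t),
\]
is \emph{not} valid for elastodynamics on a domain with a \emph{moving} crack. Your parenthetical remark ``no crack-dissipation term is present in the pure elastodynamic problem'' is the error: the absence of viscosity does not mean the absence of dissipation. Energy is released at the advancing crack tip, and the correct balance carries an additional nonnegative term (the dynamic energy release rate integrated against $\dot s_k$); this is precisely why the class $\mathcal{B}^{reg}$ in Definition~\ref{def:Creg-t0t1} includes $s(t_2)-s(t_1)$ in its balance. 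Proposition~\ref{rem:equivalenza} gives only the regularity $C^0([T_0,T_1];V)\cap C^1([T_0,T_1];H)$; it does not yield the identity you claim. With only an energy \emph{inequality}, the upper-semicontinuity-meets-lower-semicontinuity argument you propose collapses: you cannot match the two sides and therefore cannot upgrade weak to strong convergence.

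The repair is to carry out Step~3 (and in fact the whole argument) in the fixed-domain coordinates furnished by Lemma~\ref{lemma:costruzione-diffeom}, not merely to use those diffeomorphisms to transport test functions. After the change of variables $y=\Psi_k(t,x)$ on each subinterval, the problems for $v_k$ become hyperbolic systems on the \emph{fixed} cracked domain $\Omega\setminus\Gamma_{s(t_0)}$ with $k$-dependent coefficients; on a fixed domain the energy identity does hold without any crack-tip term, and properties~(f)--(g) give convergence of the coefficients. This is exactly how \cite{Dalmaso-Larsen-Toader} proceed, and it is what makes the iteration over subintervals that you outline in your ``Main obstacle'' actually work.
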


\begin{proof}
In the case $f_k=f$, $F_k=F=0$ for any $k\in \mathbb{N}$, it is a consequence of \cite[Theorem 3.5]{Dalmaso-Larsen-Toader}. In the general case, the result follows from the same approximation argument used in \cite[Lemma 5.7, Proposition 5.9]{Cianci-Dalmaso}.
\end{proof}

Now we are in a position to obtain the same results for the viscoelastic system.

\begin{theorem}\label{thm:ex-uniq-visco-t0t1}
Let $\gamma\in \mathcal{G}_{r,L}$, $0\leq T_0 < T_1 \leq T$, $s \in \mathcal{S}^{piec}_{\mu,M}(T_0,T_1)$, with $s(T_1)\leq b_\gamma$, and assume \eqref{eq:C-V}, \eqref{eq:forzante-f-t0t1}-\eqref{eq:condizione-nulla-dentro} and \eqref{eq:controllo-velocità}. Let
$ u^{0} \in V_{s(T_0)}$, such that $u^{0}-u_D(T_0)\in V^D_{s(T_0)}$ and let $u^{1}\in H$. Then there exists a unique solution $u$ of problem \eqref{eq:weak_generalizzata1-t0t1}-\eqref{eq:weak_generalizzata3-t0t1}. Moreover $ u \in \mathcal{V}^\infty(T_0,T_1)$, $u\in C^0_w([T_0,T_1];V)$, and $\dot u\in C^0_w([T_0,T_1];H)$.
\end{theorem}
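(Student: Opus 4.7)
The plan is to reduce the viscoelastic problem to the purely elastodynamic one from Theorem~\ref{thm:ex-uniq-onde-t0t1} by moving the memory term to the right-hand side as an additional forcing, and then running a Banach fixed-point argument on the map sending a displacement field $w$ to the elastodynamic solution with this modified forcing.

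For $w \in \mathcal{V}_{\gamma,s}(T_0,T_1)$ I would set
\begin{equation*}
F_w(t) := F(t) - \int_{T_0}^t e^{\tau-t}\,\V Ew(\tau)\,d\tau,
\end{equation*}
and first verify $F_w \in H^1(T_0,T_1;\tilde H)$: since $Ew \in L^2(T_0,T_1;\tilde H)$, the kernel is bounded by $1$, and the time derivative $\dot F_w(t) = \dot F(t) - \V Ew(t) + \int_{T_0}^t e^{\tau-t}\V Ew(\tau)\,d\tau$ is again in $L^2(T_0,T_1;\tilde H)$. Theorem~\ref{thm:ex-uniq-onde-t0t1} and Proposition~\ref{rem:equivalenza} then produce a unique elastodynamic solution $T(w) \in \mathcal{V}^\infty(T_0,T_1) \cap C^0([T_0,T_1];V) \cap C^1([T_0,T_1];H)$ with forcing $(f,F_w)$ and initial data $(u^0,u^1)$. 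By construction, a fixed point of $T$ is a weak solution of the viscoelastic problem in the sense of Definition~\ref{def:weak-sol-t0t1}.

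To produce such a fixed point I would exploit linearity: the difference $z := T(w_1)-T(w_2)$ solves an elastodynamic problem with zero data and with forcing governed only by $F_{w_1}-F_{w_2}$. The standard elastodynamic energy identity (testing formally by $\dot z$ and integrating by parts in time on the $(F,E\dot z)$ term, in the spirit of \cite{Cianci-Dalmaso}) should yield an estimate of the form
\begin{equation*}
\sup_{t\in[T_0,T_0+\delta]}\bigl(\|\dot z(t)\|^2 + \|Ez(t)\|^2\bigr) \leq C\,\delta \sup_{t\in[T_0,T_0+\delta]} \|E(w_1-w_2)(t)\|^2,
\end{equation*}
with $C$ depending only on $\lambda,\Lambda,\mu,M$. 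Choosing $\delta$ so that $C\delta<1$, $T$ becomes a contraction on a closed subset of $\mathcal{V}^\infty(T_0,T_0+\delta)$ in the norm $\sup_t\|E\cdot(t)\|$, yielding existence and uniqueness on $[T_0,T_0+\delta]$. Since $\delta$ depends only on the structural constants, a finite iteration covers all of $[T_0,T_1]$, with the regularity from Proposition~\ref{rem:equivalenza} providing admissible initial data at each restart.

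The hard part is the contraction estimate: I need to bound both $F_{w_1}-F_{w_2}$ and $\dot F_{w_1}-\dot F_{w_2}$, which enter the elastodynamic energy estimate, in terms of $\sup_t\|E(w_1-w_2)(t)\|$ with a prefactor that vanishes as $\delta \to 0$. The integral representation of $F_w$ and the uniform bound on $\V$ from $\V \in \mathcal{E}(\lambda,\Lambda)$ make this possible on short intervals. Uniqueness on the full $[T_0,T_1]$ then follows by applying the same contraction to the difference of two hypothetical solutions on consecutive short subintervals, and the regularity claims $u \in \mathcal{V}^\infty(T_0,T_1)$, $u \in C^0_w([T_0,T_1];V)$, and $\dot u \in C^0_w([T_0,T_1];H)$ are inherited from the elastodynamic solution $T(u)$ at the fixed point.
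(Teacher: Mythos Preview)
Your approach is essentially the same as the paper's: the paper proves this by invoking the argument of \cite[Theorem~4.10]{Cianci-Dalmaso}, which is precisely the fixed-point scheme you outline---treat the memory term as an additional contribution to the forcing $F$, solve the resulting elastodynamic problem via Theorem~\ref{thm:ex-uniq-onde-t0t1}, and contract on short time intervals whose length depends only on structural constants. One minor correction: in your definition of $F_w$ the sign of the integral should be $+$ rather than $-$ (moving the memory term from the left of \eqref{eq:weak_generalizzata2-t0t1} to the right \emph{adds} it to $F$; compare the formula in the proof of Proposition~\ref{rem:reg-viscoelastic}).
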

\begin{proof}
We can not apply directly \cite[Theorem 4.10]{Cianci-Dalmaso} because in general \eqref{eq:vecchio-controllo} is not satisfied. However, assuming \eqref{eq:controllo-velocità} instead of \eqref{eq:vecchio-controllo} we can repeat all arguments of the proof of that theorem, which is based on existence and uniqueness for elastodynamics with cracks (in our case given by Theorem \ref{thm:ex-uniq-onde-t0t1} and Theorem \ref{thm:dip-cont-fract+c.i.-t0t1}) and on a fixed point argument.
\end{proof}

\begin{proposition}\label{rem:reg-viscoelastic}
Under the same assumptions of Theorem \ref{thm:ex-uniq-visco-t0t1}, let $u$ be the unique solution of problem \eqref{eq:weak_generalizzata1-t0t1}-\eqref{eq:weak_generalizzata3-t0t1}. Then $u\in C^0([T_0,T_1], V) \cap C^1([T_0,T_1], H)$.
\end{proposition}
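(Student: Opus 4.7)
\medskip

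The strategy is to absorb the memory term into a modified elastic forcing and then invoke Proposition \ref{rem:equivalenza}. By Theorem \ref{thm:ex-uniq-visco-t0t1} the unique solution $u$ lies in $\mathcal{V}^\infty(T_0,T_1)$; in particular $Eu\in L^\infty(T_0,T_1;\tilde H)$. This is the key a priori regularity that makes the argument work.

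First I would define
\begin{equation*}
G(t):=\int_{T_0}^t \textnormal{e}^{\tau-t}\,\V Eu(\tau)\,\dtau,\qquad t\in[T_0,T_1],
\end{equation*}
and verify that $G\in H^1(T_0,T_1;\tilde H)$. The bound $\|G(t)\|\le\Lambda\,(t-T_0)\,\|Eu\|_{L^\infty(T_0,T_1;\tilde H)}$ shows $G\in L^\infty(T_0,T_1;\tilde H)$. Differentiating under the integral sign yields $\dot G(t)=\V Eu(t)-G(t)$, which is again in $L^\infty(T_0,T_1;\tilde H)\subset L^2(T_0,T_1;\tilde H)$. Hence $G\in H^1(T_0,T_1;\tilde H)$, and a trivial extension (by reflection or zero to $[0,T]$) produces $\tilde F:=F+G\in H^1(0,T;\tilde H)$, so that $\tilde F$ satisfies the regularity required in \eqref{eq:forzante-f-t0t1}.

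Next I would rewrite the viscoelastic formulation as an elastodynamic one. Using Fubini in the double integral in \eqref{eq:weak_generalizzata2-t0t1}, for every test function $\varphi$,
\begin{equation*}
\int_{T_0}^{T_1}\!\!\int_{T_0}^t \textnormal{e}^{\tau-t}(\V Eu(\tau),E\varphi(t))\,\dtau\,\dt=\int_{T_0}^{T_1}(G(t),E\varphi(t))\,\dt.
\end{equation*}
Substituting into \eqref{eq:weak_generalizzata2-t0t1} shows that $u$ is a weak solution of the elastodynamics problem of Definition \ref{def:eq-onder-t0t1} with the same data $u^0$, $u^1$, $u_D$, $f$ and with $F$ replaced by $\tilde F$. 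All hypotheses of Theorem \ref{thm:ex-uniq-onde-t0t1} and Proposition \ref{rem:equivalenza} are met for this modified problem, so its unique solution belongs to $C^0([T_0,T_1];V)\cap C^1([T_0,T_1];H)$. By uniqueness in Theorem \ref{thm:ex-uniq-onde-t0t1} this solution coincides with $u$, proving the claim.

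There is essentially no hard step: the only point requiring care is the $H^1$-regularity of $G$, which hinges exclusively on $u\in\mathcal{V}^\infty(T_0,T_1)$ from Theorem \ref{thm:ex-uniq-visco-t0t1}; once $\tilde F$ is built, the regularity upgrade is reduced to the already established elastodynamic case in Proposition \ref{rem:equivalenza}.
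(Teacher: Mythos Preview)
Your proof is correct and follows exactly the same route as the paper: absorb the memory term $\mathcal{L}_{T_0}u$ into the forcing by replacing $F$ with $F+G$ and then invoke Proposition~\ref{rem:equivalenza}. You have simply spelled out the details (the $H^1$-regularity of $G$, the Fubini step, and the extension to $[0,T]$) that the paper's one-line proof leaves implicit.
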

\begin{proof}
It is enough to apply Proposition \ref{rem:equivalenza} with $F(t)$ replaced by
\begin{equation*}
    F(t)+\int^t_{T_0} \textnormal{e}^{\tau-t}\V Eu(\tau) \dtau,
\end{equation*}
for all $t\in [T_0,T_1]$.
\end{proof}

The following theorem provides the continuous dependence on the data for the solution of the viscoelastic problem.

\begin{theorem}\label{thm:dip-cont-fract+c.i.-VISCO-t0t1}
Let $\gamma\in \mathcal{G}_{r,L}$, $0\leq T_0 < T_1 \leq T$, $s \in \mathcal{S}^{piec}_{\mu,M}(T_0,T_1)$, with $s(T_1)\leq b_\gamma$, and assume \eqref{eq:C-V}, \eqref{eq:forzante-f-t0t1}-\eqref{eq:condizione-nulla-dentro} and \eqref{eq:controllo-velocità}. Let $ u^{0} \in V^\gamma_{s(T_0)}$, such that $u^{0}-u_D(T_0)\in V^{\gamma,D}_{s(T_0)}$ and let $u^{1}\in H$.
Let $\gamma_k\in \mathcal{G}_{r,L}$, let $s_k \in \mathcal{S}^{piec}_{\mu,M}(T_0,T_1)$, with $s_k(T_1)\leq b_{\gamma_k}$. Let $u^0_k\in V^{\gamma_k}_{s_k(T_0)}$, such that $u^{0}_k-u_D(T_0)\in V^{\gamma_k,D}_{s_k(T_0)}$, $u^1_k,\in H$, and assume \eqref{eq:fk-Fk}. Let $u$ be the weak solution of problem \eqref{eq:weak_generalizzata1-t0t1}-\eqref{eq:weak_generalizzata3-t0t1} on the cracked domains $\Omega\setminus\Gamma^\gamma_{s(t)}$, $t\in[T_0,T_1]$. Let $u_k$ the weak solution problem \eqref{eq:visco1-n-t0t1}-\eqref{eq:visco3-n-t0t1} on the cracked domains $\Omega\setminus\Gamma^{\gamma_k}_{s_k(t)}$, $t\in[T_0,T_1]$. Assume that
\begin{equation}\label{eq:conf-forz-visc-t0t1}
    \|f_k - f\|_{\Ls(0,T;H)}\to 0,
\qquad
    \|F_k - F\|_{ H^1(0,T;\tilde{H})}\to 0,
\end{equation}
\begin{equation}
    s_k \to s \quad \textnormal{uniformly},
\qquad
    \gamma_k \to \gamma \quad \textnormal{uniformly},
\end{equation}
\begin{equation}
    u^0_k \to u^0 \quad \textnormal{in } H,
\qquad
    Du^0_k \to Du^0 \quad \textnormal{in } \underline{H},
\qquad
    u^1_k \to u^1 \quad \textnormal{in } H.
\end{equation}
Then
\begin{equation}
    u_k(t) \to u(t) \quad \textnormal{in } {H},
\end{equation}
\begin{equation}
     Du_k(t) \to Du(t) \quad \textnormal{in } \underline{H},
\end{equation}
\begin{equation}
    \dot u_k(t) \to \dot u(t) \quad \textnormal{in } {H},
\end{equation}
for every $t\in [T_0,T_1]$. Moreover there exists a constant $C>0$ such that
\begin{equation*}
    \| u_k(t) \| + \| D u_k(t)\| + \| \dot{ u}_k(t) \| \leq C
\end{equation*}
for every $k\in \mathbb{N}$ and $t \in [T_0,T_1]$.
\end{theorem}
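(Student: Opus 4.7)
The plan is to reduce the statement to the already-established continuous dependence for elastodynamics (Theorem \ref{thm:dip-cont-fract+c.i.-t0t1}), by viewing the viscoelastic equation as an elastodynamic one with an augmented forcing term. Setting
\begin{equation*}
    G_k(t):=\int^t_{T_0} \textnormal{e}^{\tau-t}\,\V Eu_k(\tau)\,\dtau, \qquad G(t):=\int^t_{T_0} \textnormal{e}^{\tau-t}\,\V Eu(\tau)\,\dtau,
\end{equation*}
one sees by moving the memory terms to the right-hand side of the weak formulation that $u_k$ is the weak solution of the elastodynamic problem on $\Omega\setminus \Gamma^{\gamma_k}_{s_k(t)}$ with forcing pair $(f_k,\,F_k+G_k)$, while $u$ solves the corresponding elastodynamic problem on $\Omega\setminus \Gamma^{\gamma}_{s(t)}$ with forcing $(f,\,F+G)$.

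The first step is to establish the uniform bound claimed at the end of the theorem: a standard energy estimate on the viscoelastic problem (testing with a suitable regularization of $\dot u_k - \dot u_D$, using coercivity of $\A$, and applying Gronwall's lemma to absorb the memory contribution) gives $\|u_k\|_{\mathcal{V}^\infty_{\gamma_k,s_k}}\leq C$ uniformly in $k$, which in turn supplies a uniform $H^1(T_0,T_1;\tilde H)$ bound on $G_k$.

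Next I introduce the auxiliary sequence $w_k$, defined as the weak solution of the elastodynamic problem on the same cracked domain $\Omega\setminus \Gamma^{\gamma_k}_{s_k(t)}$, with the same initial and Dirichlet data as $u_k$, but with forcing $(f_k,\,F_k+G)$, where $G$ is constructed from the limit $u$. Because $F_k+G\to F+G$ in $H^1(0,T;\tilde H)$ and all remaining data converge, Theorem \ref{thm:dip-cont-fract+c.i.-t0t1} applies directly and yields the pointwise convergences $w_k(t)\to u(t)$ in $H$, $Dw_k(t)\to Du(t)$ in $\underline{H}$, and $\dot w_k(t)\to \dot u(t)$ in $H$ for every $t\in[T_0,T_1]$.

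It then remains to bound $u_k-w_k$, which solves a linear elastodynamic problem with vanishing initial and Dirichlet data and forcing $(0,\,G_k-G)$. The plan is to use the standard elastodynamic energy estimate on the cracked domain in the Gronwall-friendly form
\begin{equation*}
    \|u_k(t)-w_k(t)\|^2_{V^{\gamma_k}}+\|\dot u_k(t)-\dot w_k(t)\|^2 \leq C\int^{t}_{T_0}\|E(u_k(\tau)-u(\tau))\|^2\,\dtau,
\end{equation*}
then split $u_k-u=(u_k-w_k)+(w_k-u)$ inside the integrand and close by Gronwall together with the convergence of $w_k-u$ from the previous step (using the uniform bound of Step 1 to justify dominated convergence). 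The hardest part will be producing the above estimate in precisely this form: it requires pulling back through the diffeomorphisms of Lemma \ref{lemma:costruzione-diffeom} and exploiting the subcritical speed bound \eqref{eq:controllo-velocità} to keep the pulled-back principal part coercive, which is the same mechanism underlying Theorem \ref{thm:ex-uniq-onde-t0t1} and on which the entire strategy hinges.
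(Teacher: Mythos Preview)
Your strategy is sound and reaches the conclusion, but it proceeds differently from the paper. The paper does not argue directly: it simply observes that the proof of \cite[Theorem~6.1]{Cianci-Dalmaso} goes through verbatim once the elastodynamic continuous dependence is available under the explicit speed bound \eqref{eq:controllo-velocità}, i.e.\ once Theorem~\ref{thm:dip-cont-fract+c.i.-t0t1} is in hand. That cited proof is organised around the contraction-map viewpoint: on a fixed crack evolution the viscoelastic solution is the unique fixed point of the map $T\colon v\mapsto$ (elastodynamic solution with forcing $(f,F+\mathcal{L}_{T_0}v)$), and convergence of $u_k$ to $u$ is obtained from an abstract lemma on convergence of fixed points of a sequence of contractions (\cite[Lemma~4.2]{Cianci-Dalmaso}), with Theorem~\ref{thm:dip-cont-fract+c.i.-t0t1} supplying the required convergence $T_k(v)\to T(v)$. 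Your auxiliary $w_k$ is exactly $T_k(u)$ in that language, and your Gronwall step is an explicit, hands-on replacement for the abstract fixed-point convergence lemma. What the paper's route buys is brevity and modularity (the contraction estimate is proved once and recycled); what yours buys is self-containment, at the price of having to produce the energy inequality for $u_k-w_k$ on the moving domains with a constant \emph{uniform in $k$}. That uniformity does hold---it comes from the uniform bounds on the diffeomorphisms in Lemma~\ref{lemma:costruzione-diffeom} together with the convergence $\gamma_k\to\gamma$, $s_k\to s$---but you should state it explicitly, since it is the point on which your closing Gronwall argument rests.
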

\begin{proof}
As in the proof of Theorem \ref{thm:ex-uniq-visco-t0t1}, we cannot apply directly \cite[Theorem 6.1]{Cianci-Dalmaso}, because in general \eqref{eq:vecchio-controllo} is not satisfied. However, assuming \eqref{eq:controllo-velocità} instead of \eqref{eq:vecchio-controllo} we can repeat all arguments of the proof of that theorem, which is based on the continuous dependence on the data for elastodynamics with cracks (in our case given by Theorem \ref{thm:dip-cont-fract+c.i.-t0t1}) and on a results concerning the convergence of fixed points of a sequence of functions (see \cite[Lemma 4.2]{Cianci-Dalmaso}).
\end{proof}

\section{Energy balance}\label{sect:energy-balance}

In this section we study the problem of the dynamic energy-dissipation balance on a given cracked domain $\Omega\setminus\Gamma^\gamma_{s(t)}$ for a solution of a viscoelastic problem.

Let $\gamma\in \mathcal{G}_{r,L}$, $0\leq T_0 < T_1 \leq T$, $s \in \mathcal{S}^{piec}_{\mu,M}(T_0,T_1)$, with $s(T_1)\leq b_\gamma$. It is convenient to define the operator $\mathcal{L}_{T_0}\colon\mathcal{V}(T_0,T_1)\to H^1(T_0,T_1;H)$ as
\begin{equation}\label{eq:def-L_T-t0t1}
    (\mathcal{L}_{T_0}u)(t):= \int^t_{T_0} \textnormal{e}^{\tau-t}\V Eu(\tau) \dtau,
\end{equation}
for all $u\in \mathcal{V}(T_0,T_1)$, for all $t\in [T_0,T_1]$.
Since
\begin{equation*}
     (\dot{\wideparen{\mathcal{L}_{T_0}u}})(t)=  \V Eu(t) -  \int^t_{T_0}  \textnormal{e}^{\tau-t} \V Eu(\tau) \,\dtau,
\end{equation*}
it is easy to check that $\mathcal{L}_{T_0}$ is bounded. Indeed, using the H\"{o}lder inequality it is possible to prove that
\begin{equation}\label{eq:Lu-Linfty-t0t1}
    \|\mathcal{L}_{T_0}u\|_{L^\infty(T_0,T_1;\tilde{H})} \leq (T_1-T_0)^{1/2} \|\V\|_{\infty} \| u \|_{\mathcal{V}(T_0,T_1)},
\end{equation}
\begin{equation}\label{eq:dotLu-L2-t0t1}
    \|\dot{\wideparen{\mathcal{L}_{T_0}u}}\|_{L^2(T_0,T_1;\tilde{H})} \leq (1+T_1-T_0) \|\V\|_{\infty} \| u \|_{\mathcal{V}(T_0,T_1)}.
\end{equation}

Assume \eqref{eq:C-V}, \eqref{eq:forzante-f-t0t1}-\eqref{eq:condizione-nulla-dentro} and let $v \in C^0([T_0,T_1],V)\cap C^1([T_0,T_1],H)$. For every $t \in [T_0,T_1]$ the sum of kinetic and elastic energy is given by
\begin{equation}\label{eq:EnCinEl}
    \mathcal{E}_v(t)=\frac{1}{2}\|\dot v(t)\|^2+ \frac{1}{2}(\C Ev(t),Ev(t)).
\end{equation}
For an interval $[t_1,t_2]\subset [T_0,T_1]$ the dissipation due to viscosity between time $t_1$ and $t_2$ is given by
\begin{align}\label{eq:ViscoDiss}
    \mathcal{D}_v(t_1,t_1)=&\frac{1}{2}(\V Ev(t_2),Ev(t_2)) - \frac{1}{2}(\V Ev(t_1),Ev(t_1))\nonumber\\ &-((\mathcal{L}_{T_0}v)(t_2),Ev(t_2)) +((\mathcal{L}_{T_0}v)(t_1),Ev(t_1))\nonumber\\
    &+\int^{t_2}_{t_1} (\V Ev(t),Ev(t)) \dt{} - \int^{t_2}_{t_1}((\mathcal{L}_{T_0}v)(t),Ev(t)) \dt{}.
\end{align}
Moreover, we assume that the energy dissipated in the process of crack production on the interval $[t_1,t_2]$ is proportional to  $s(t_2)-s(t_1)$, which represent the length of the crack increment. For simplicity we take the proportionality constant equal to one. 
Finally, the work done between time $t_1$ and $t_2$ by the boundary and volume forces is
\begin{align}\label{eq:WorkEqWeak}
    \!\!\mathcal{W}_v(t_1,t_2)\!=&\! \int^{t_2}_{t_1} \Big( (f(t), \dot v(t) - \dot u_D(t)) + ((\C+\V)Ev(t),E\dot{u}_D(t) ) - ((\mathcal{L}_{T_0}v)(t),E\dot{u}_D(t) )  \Big)\dt{} \nonumber \\
    -&\int^{t_2}_{t_1}(\dot F(t), Ev(t)- Eu_D(t))\dt{} -\int^{t_2}_{t_1} (\dot v(t), \ddot v_D(t)) \dt{} + (\dot v(t_2), \dot u_D(t_2)) \nonumber \\
    -& (\dot v(t_1), \dot u_D(t_1)) \!+\! (F(t_2), Ev(t_2)-Eu_D(t_2))\!-\!(F(t_1), Ev(t_1)-Eu_D(t_1)).
\end{align}

\begin{remark}
 When $F=F_0$ as in \eqref{F_0} and all terms are regular enough, formulas \eqref{eq:ViscoDiss} and \eqref{eq:WorkEqWeak} can be obtained from \eqref{intro-mainprobl-nc-infty} in $(-\infty, T]$, using the explicit expression of the stress tensor \eqref{stress-orig} and integrating by parts. For more details when viscosity is not present see also to \cite[Section 3]{Dalmaso-Larsen-Toader2016} and \cite[Section 4]{Dalmaso-Larsen-Toader}.
\end{remark}

\begin{remark}
 We stress that \eqref{eq:ViscoDiss} and \eqref{eq:WorkEqWeak} make sense for every weak solution of problem \eqref{eq:weak_generalizzata1-t0t1}-\eqref{eq:weak_generalizzata2-t0t1}, thanks to Proposition \ref{rem:reg-viscoelastic}.
\end{remark}

We now define the class of cracks whose solutions of the viscoelastic problem satisfy the dynamic energy-dissipation balance.

\begin{definition}\label{def:Creg-t0t1}
Let $0\leq T_0 < T_1 \leq T$, $s_0 \geq 0$, and $\overline{\gamma}\in \mathcal{G}_{r,L}$, with $b_{\overline{\gamma}}=s_0$, and assume \eqref{eq:C-V}, \eqref{eq:forzante-f-t0t1}-\eqref{eq:condizione-nulla-dentro} and \eqref{eq:controllo-velocità}. Let
$ u^{0} \in V^{\overline{\gamma}}_{s_0}$, such that $u^{0}-u_D(T_0)\in V^{\overline{\gamma},D}_{s_0}$ and let $u^{1}\in H$. The class $$\mathcal{B}^{reg}(T_0,T_1)=\mathcal{B}^{reg}(T_0,T_1,s_0,\overline{\gamma},\C,\V,f,F,u_D,u^0,u^1)$$ is composed of all pairs $(\gamma,s)$, with $\gamma\in \mathcal{G}_{r,L}$, $\gamma|_{[a_0,s_0]}=\overline{\gamma}|_{[a_0,s_0]}$, $s\in \mathcal{S}^{reg}_{\mu,M}([T_0,T_1])$, $s(T_0)=s^0$, and $s(T_1)\leq b_\gamma$, such that the unique weak solution $u$ of the viscoelastic problem \eqref{eq:weak_generalizzata1-t0t1}-\eqref{eq:weak_generalizzata3-t0t1} satisfies the energy-dissipation balance
\begin{align}\label{eq:energy-diss-balance-t0t1}
\mathcal{E}_u(t_2)-\mathcal{E}_u(t_1) + \mathcal{D}_u(t_1,t_2) + s(t_2) - s(t_1) = \mathcal{W}_u(t_1,t_2)
\end{align}
for every interval $[t_1,t_2]\subset [T_0,T_1]$. Similarly, the class $$\mathcal{B}^{piec}(T_0,T_1)= \mathcal{B}^{piec}(T_0,T_1,s_0,\overline{\gamma},\C,\V,f,F,u_D,u^0,u^1)$$
is defined in the same way replacing $s\in \mathcal{S}^{reg}_{\mu,M}([T_0,T_1])$ by $s\in \mathcal{S}^{piec}_{\mu,M}([T_0,T_1])$.

\end{definition}

The class $\mathcal{B}^{reg}(T_0,T_1)$ is nonempty, as clarified by the following result, whose proof follows the lines of \cite[Lemma 1]{Dalmaso-Scala} and \cite[Proposition 2.7]{Dalmaso-Sapio}.

\begin{proposition}\label{prop:frattura-fissa}
Under the assumption of Definition \ref{def:Creg-t0t1}, the pair $(\overline{\gamma}, s)$, with $s(t)=s_0$ for every $t\in [T_0,T_1]$, belongs to $\mathcal{B}^{reg}(T_0,T_1)$.
\end{proposition}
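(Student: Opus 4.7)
Admissibility of $(\overline{\gamma}, s)$ in the sense of Definition \ref{def:Creg-t0t1} is immediate: the constant function $s \equiv s_0$ has vanishing derivatives of all orders, so $s \in \mathcal{S}^{reg}_{\mu,M}(T_0,T_1)$; moreover $\gamma|_{[a_0,s_0]} = \overline{\gamma}|_{[a_0,s_0]}$, $s(T_0) = s_0$, and $s(T_1) = s_0 = b_{\overline{\gamma}}$. Hence the proposition reduces to verifying the energy-dissipation balance \eqref{eq:energy-diss-balance-t0t1} for the unique weak solution $u$ of \eqref{eq:weak_generalizzata1-t0t1}--\eqref{eq:weak_generalizzata3-t0t1}, which, since $s(t_2) - s(t_1) = 0$, takes the form
\begin{equation*}
\mathcal{E}_u(t_2) - \mathcal{E}_u(t_1) + \mathcal{D}_u(t_1,t_2) = \mathcal{W}_u(t_1,t_2)\qquad\text{for every }[t_1,t_2]\subset[T_0,T_1].
\end{equation*}

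The essential feature of the fixed-crack situation is that the Dirichlet space $V^D_{s(t)} \equiv V^D_{s_0}$ is independent of $t$, so that the problem becomes a standard visco-elastodynamic problem with memory on the fixed Lipschitz set $\Omega \setminus \Gamma_{s_0}$. By Theorem \ref{thm:ex-uniq-visco-t0t1} and Proposition \ref{rem:reg-viscoelastic}, the solution satisfies $u \in C^0([T_0,T_1];V) \cap C^1([T_0,T_1];H)$ with $u(t) - u_D(t) \in V^D_{s_0}$ for every $t$. My plan is to derive the identity by formally testing the equation with $\dot u - \dot u_D$, integrating over $[t_1,t_2]$, and using the identity
\begin{equation*}
\partial_t (\mathcal{L}_{T_0} u)(t) = \V Eu(t) - (\mathcal{L}_{T_0}u)(t)
\end{equation*}
in combination with the Leibniz rule applied to $t \mapsto ((\mathcal{L}_{T_0}u)(t), Eu(t))$. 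A direct calculation, together with an integration by parts of $\int (F, E\dot u)\,\dt$, converts the inertial and elastic contributions of the left-hand side into $\mathcal{E}_u(t_2) - \mathcal{E}_u(t_1) + \mathcal{D}_u(t_1,t_2)$ (the boundary term $((\mathcal{L}_{T_0}u),Eu)\big|_{t_1}^{t_2}$ and the extra $\int(\V Eu,Eu)\,\dt - \int((\mathcal{L}_{T_0}u),Eu)\,\dt$ produced by the Leibniz identity above are exactly what is needed to recover the form of $\mathcal{D}_u$ in \eqref{eq:ViscoDiss}); the remaining terms reorganize into $\mathcal{W}_u(t_1,t_2)$ as defined in \eqref{eq:WorkEqWeak}.

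The main obstacle is the rigorous justification of this manipulation, because $\dot u - \dot u_D$ is not an admissible test function in \eqref{eq:weak_generalizzata2-t0t1}: it belongs to $C^0([T_0,T_1];H)$ but not in general to $L^2(T_0,T_1;V)$, and the test functions must additionally vanish at the endpoints $T_0$ and $T_1$. I would circumvent this by a Steklov-type regularization in time together with a smooth cutoff $\chi \in C^\infty_c(T_0,T_1)$ identically equal to $1$ on $[t_1,t_2]$: replace $u - u_D$ by its time-Steklov average, plug a suitable difference quotient multiplied by $\chi$ into the weak formulation, carry out the computation at the regularized level (where every step is justified), and then pass to the limit in the mollification parameter followed by enlarging the support of $\chi$ to $[t_1,t_2]$. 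The limits are controlled by the strong continuity $t \mapsto u(t) \in V$ and $t \mapsto \dot u(t) \in H$ from Proposition \ref{rem:reg-viscoelastic}, and the memory terms are handled using the continuity of $\mathcal{L}_{T_0}u$ implied by estimates \eqref{eq:Lu-Linfty-t0t1}--\eqref{eq:dotLu-L2-t0t1}. This is essentially the scheme employed in \cite[Lemma 1]{Dalmaso-Scala} and \cite[Proposition 2.7]{Dalmaso-Sapio} in the memory-free case; the extra contributions from $\mathcal{L}_{T_0}u$ are linear and converge strongly, so they do not introduce any new technical difficulty.
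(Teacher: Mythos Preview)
Your proposal is correct and follows essentially the same approach as the paper: both justify the formal testing with $\dot u - \dot u_D$ via a difference-quotient/Steklov regularization, rely on the strong time-continuity from Proposition~\ref{rem:reg-viscoelastic} to pass to the limit, and cite \cite{Dalmaso-Scala} and \cite{Dalmaso-Sapio} for the scheme. The only cosmetic differences are that the paper first reduces to $u_D=0$ and extends the data to $[0,2T]$ (thereby avoiding endpoint issues), then derives the pointwise-in-time formulation $\langle \ddot u(t),\varphi\rangle + \dots$ and tests it with $\delta^h u(t)$ on $[t_1,t_2]$, whereas you keep $u_D$ in the computation and use a cutoff $\chi$ approximating $\mathbb{1}_{[t_1,t_2]}$ instead.
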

\begin{proof}
We prove the result in the case of homogeneous boundary condition, i.e. $u_D=0$. Indeed, the case of non-homogeneous data can be obtained considering the equation for $u-u_D$. It is convenient to extend our data on $[0,2T]$ by setting $f(t)=0$ and $F(t)=F(T)$ for $t\in (T,2T]$. It is clear that $f\in L^2(0,2T, H)$, $F\in H^1(0,2T,\tilde{H})$, and that, by uniqueness, the solution $u$ of the viscoelastic problem on $[T_0,2T]$ is an extension of the solution on $[T_0,T_1]$.
Since the domain is constant with respect to time we deduce from \eqref{eq:weak_generalizzata1-t0t1}-\eqref{eq:weak_generalizzata2-t0t1} that $u\in H^2([T_0,2T]; (V^D_{s_0})^*)$ and
\begin{equation}\label{eq:wstrongt0t1}
    \langle \ddot{u} (t), \varphi \rangle + ((\C+\V)Eu(t),E\varphi)-(\mathcal{L}_{T_0}u(t), E\varphi)=(f(t),\varphi) + (F(t), E\varphi).
\end{equation}
for all $\varphi \in V^D_{s_0}$ and for a.e. $t\in [T_0,2T]$.

Given a Banach space $X$ and a function $r:[T_0,2T] \to X$, for every $h>0$ we define $\sigma^hr,\delta^hr:[T_0,2T-h]\to X$ by $\sigma^hr(t):=r(t+h)+r(t)$, $\delta^hr(t):=r(t+h)-r(t)$. For a.e. $t\in [T_0,2T-h]$ we have $\sigma^hu(t),\delta^hu(t) \in V^D_{s_0}.$ We consider \eqref{eq:wstrongt0t1} at time $t$ and a time $t+h$, in both cases with $\varphi=\delta^h u(t)$. We sum the two expressions and we integrate on $[t_1,t_2] \subseteq [T_0,T_1]$. We get 
\begin{equation}\label{eq:diff-finite-t0t1}
     \int^{t_2}_{t_1} \big( K_h (t) + E_h(t) + D_h(t) \big) \, \dt = \int^{t_2}_{t_1} L_h(t) \, \dt{},
\end{equation}
where the terms that appear in \eqref{eq:diff-finite-t0t1} are defined as
\begin{align*}
    K_h(t)&:=\langle\sigma^h\ddot{u}(t), \delta^h u(t)\rangle, \\
    E_h(t)&:=((\C+\V)\sigma^hEu(t),\delta^hEu(t)), \\
    D_h(t)&:=-(\sigma^h[\mathcal{L}_{T_0}u(t)], \delta^hEu(t)),\\
    L_h(t)&:=(\sigma^hf(t), \delta^hu(t)) + (\sigma^hF(t), \delta^hEu(t)).
\end{align*}
We have that
\begin{align*}
    \int^{t_2}_{t_1} K_h(t) \, \dt{}&=- \int^{t_2}_{t_1} (\sigma^h\dot{u}(t), \delta^h \dot u(t)) \, \dt + (\sigma^h\dot{u}(t_2), \delta^h  u(t_2)) - (\sigma^h\dot{u}(t_1), \delta^h  u(t_1))\nonumber\\
    &=-\int^{t_2}_{t_1} \big( \| \dot{u}(t+h) \|^2 \dt{}- \| \dot{u}(t) \|^2 \big) \dt{} + (\sigma^h\dot{u}(t_2), \delta^h  u(t_2)) - (\sigma^h\dot{u}(t_1), \delta^h  u(t_1))\nonumber\\
    &=-\int^{t_2+h}_{t_1+h} \| \dot{u}(t) \|^2 \dt{} + \int^{t_2}_{t_1} \| \dot{u}(t) \|^2 \dt{} + (\sigma^h\dot{u}(t_2), \delta^h u(t_2)) - (\sigma^h\dot{u}(t_1), \delta^h  u(t_1))\nonumber\\
    &= - \int^{t_2+h}_{t_2}\!\!\!\! \| \dot{u}(t) \|^2 \dt{} + \int^{t_1+h}_{t_1} \!\!\!\! \| \dot{u}(t) \|^2 \dt{} + (\sigma^h\dot{u}(t_2), \delta^h  u(t_2)) - (\sigma^h\dot{u}(t_1), \delta^h  u(t_1)) \nonumber 
\end{align*}
and dividing by $h$ we get
\begin{align*}
    \int^{t_2}_{t_1}\!\frac{K_h(t)}{h}  \, \dt{}&=- \fint^{t_2+h}_{t_2}\!\!\!\!\!\! \| \dot{u}(t) \|^2 \dt{} + \fint^{t_1+h}_{t_1} \!\!\!\!\!\! \| \dot{u}(t) \|^2 \dt{} + (\sigma^h\dot{u}(t_2), \frac{\delta^h u(t_2)}{h}) - (\sigma^h\dot{u}(t_1), \frac{\delta^h u(t_1)}{h}).
\end{align*}
Then
\begin{equation}\label{eq:K_h}
      \int^{t_2}_{t_1}\frac{K_h(t)}{h} \, \dt{}\to - \|\dot u(t_2)\|^2 + \|\dot u(t_1)\|^2 + 2 \|\dot u(t_2)\|^2 - 2 \|\dot u(t_1)\|^2= \|\dot u(t_2)\|^2 - \|\dot u(t_1)\|^2,
\end{equation}
as $h\to 0^+$, where we have used the fact that $u \in  C^1([T_0,2T], H)$.
Moreover
\begin{align}
    \int^{t_2}_{t_1} E_h(t) \, \dt{}&= \int^{t_2}_{t_1} ((\C+ \V)Eu(t+h), Eu(t+h)) \dt{} - \int^{t_2}_{t_1} ((\C+ \V)Eu(t),Eu(t)) \dt{} \nonumber\\
    &=\int^{t_2+h}_{t_1+h} ((\C+ \V)Eu(t), Eu(t)) \dt{} - \int^{t_2}_{t_1} ((\C+ \V)Eu(t),Eu(t)) \dt{} \nonumber\\
    &= \int^{t_2+h}_{t_2} ((\C+ \V)Eu(t), Eu(t)) \dt{} - \int^{t_1+h}_{t_1} ((\C+ \V)Eu(t),Eu(t)) \dt{} 
\end{align}
which give us
\begin{equation}\label{eq:E_h}
    \int^{t_2}_{t_1}\frac{E_h(t)}{h}  \, \dt{} \to ((\C+ \V)Eu(t_2), Eu(t_2)) - ((\C+ \V)Eu(t_1),Eu(t_1))
\end{equation}
as $h \to 0^+$, where we have used the fact that $u \in C^0([T_0,2T], V)$.
Regarding the term $D_h$ we have
\begin{align}
    -\int^{t_2}_{t_1} D_h(t) \, \dt{}&= \int^{t_2}_{t_1} (\sigma^h[\mathcal{L}_{T_0}u(t)],Eu(t+h)) \dt{} - \int^{t_2}_{t_1} (\sigma^h[\mathcal{L}_{T_0}u(t)],Eu(t)) \dt{} \nonumber\\
    &=\int^{t_2+h}_{t_1+h} (\sigma^{-h}[\mathcal{L}_{T_0}u(t)],Eu(t)) \dt{} - \int^{t_2}_{t_1} (\sigma^h[\mathcal{L}_{T_0}u(t)],Eu(t)) \dt{} \nonumber\\
    &= \int^{t_2+h}_{t_1+h} (\mathcal{L}_{T_0}u(t-h) - \mathcal{L}_{T_0}u(t+h),Eu(t)) \dt{} \nonumber \\ &\,\,\,\,\,\,-\int^{t_1+h}_{t_1} (\sigma^h[\mathcal{L}_{T_0}u(t)],Eu(t)) \dt{} + \int^{t_2+h}_{t_2} (\sigma^h[\mathcal{L}_{T_0}u(t)],Eu(t)) \dt{} \nonumber\\
    &= \int^{t_2}_{t_1} (\mathcal{L}_{T_0}u(t) - \mathcal{L}_{T_0}u(t+2h),Eu(t+h)) \dt{} \nonumber \\ &\,\,\,\,\,\,-\int^{t_1+h}_{t_1} (\sigma^h[\mathcal{L}_{T_0}u(t)],Eu(t)) \dt{} + \int^{t_2+h}_{t_2} (\sigma^h[\mathcal{L}_{T_0}u(t)],Eu(t)) \dt{},
\end{align}
which give us
\begin{align}\label{eq:D_h}
    \int^{t_2}_{t_1} \frac{D_h(t)}{h}  \, \dt{} &= -\int^{t_2}_{t_1} \Big(\frac{\mathcal{L}_{T_0}u(t) - \mathcal{L}_{T_0}u(t+2h)}{h},Eu(t+h)\Big) \dt{} \nonumber \\ &\,\,\,\,\,\,+\fint^{t_1+h}_{t_1} (\sigma^h[\mathcal{L}_{T_0}u(t)],Eu(t)) \dt{} - \fint^{t_2+h}_{t_2} (\sigma^h[\mathcal{L}_{T_0}u(t)],Eu(t)) \dt{}\nonumber\\
    &\to 2\int^{t_2}_{t_1} ( (\dot{\wideparen{\mathcal{L}_{T_0}u}})(t),Eu(t)) \dt{} \nonumber \\ &\,\,\,\,\,\,+2(\mathcal{L}_{T_0}u(t_1),Eu(t_1)) - 2(\mathcal{L}_{T_0}u(t_2),Eu(t_2))\nonumber\\
    &= 2\int^{t_2}_{t_1}  (\V Eu(t)-\mathcal{L}_{T_0}u(t),Eu(t)) \dt{} \nonumber \\
    &\,\,\,\,\,\,+2(\mathcal{L}_{T_0}u(t_1),Eu(t_1)) - 2(\mathcal{L}_{T_0}u(t_2),Eu(t_2)), \quad \textnormal{as } h\to 0^+,
\end{align}
where we have used again that $u\in C^0([T_0,2T],V)$.

With similar arguments, we have that
\begin{align}\label{eq:L_h}
    \int^{t_2}_{t_1} \frac{L_h(t)}{h}  \dt{} & \to 2\int^{t_2}_{t_1} (f(t),\dot u(t)) \dt{} -2 \int^{t_2}_{t_1} (\dot F(t), Eu(t)) \dt{} \nonumber\\
    &\,\,\,\,\,\, + 2(\dot F(t_2), Eu(t_1)) - 2(\dot F(t_1), Eu(t_1)), \quad \textnormal{as } h\to 0^+.
\end{align}

Dividing by $h$ Equation \eqref{eq:diff-finite-t0t1} and using Equations \eqref{eq:K_h}, \eqref{eq:E_h}, \eqref{eq:D_h}, and \eqref{eq:L_h}, we get the following identity
\begin{align}
   & \|\dot u(t_2)\|^2 + ((\C+ \V)Eu(t_2), Eu(t_2)) + 2\int^{t_2}_{t_1}  (\V Eu(t)-\mathcal{L}_{T_0}u(t),Eu(t)) \dt{}\nonumber\\
   &- 2(\mathcal{L}_{T_0}u(t_2),Eu(t_2)) = \|\dot u(t_1)\|^2 + ((\C+ \V)Eu(t_1),Eu(t_1)) -2(\mathcal{L}_{T_0}u(t_1),Eu(t_1))\nonumber\\
   & + 2\int^{t_2}_{t_1} (f(t),\dot u(t)) \dt{} -2 \int^{t_2}_{t_1} (\dot F(t), Eu(t)) \dt{} + 2(\dot F(t_2), Eu(t_1)) - 2(\dot F(t_1), Eu(t_1)),
\end{align}
that is the energy-dissipation balance \eqref{eq:energy-diss-balance-t0t1} when $u_D=0$ and $s(t)=s_0$ for all $t\in [T_0,T_1]$.
\end{proof}

The following remark deals with the concatenation of solutions on adjacent time intervals.

\begin{remark}\label{Remark:concatenation}
 Under the assumption of Definition \ref{def:Creg-t0t1}, let $0 \leq T_0 < T_1 < T_2 \leq T$, $$(\gamma_1,s_1)\in \mathcal{B}^{piec}(T_0,T_1,s_0,\overline{\gamma}, \C, \V, f, F, u_D, u^0, u^1),$$ $$(\gamma_2,s_2)\in\mathcal{B}^{piec}(T_1,T_2,s_1(T_1),\gamma_1, \C, \V, f, F, u_D, u(T_1), \dot u(T_1)).$$
 Let $s \colon [T_0,T_2] \to \mathbb{R}$ be defined as
 \begin{equation}
     s(t):=\begin{cases}
     s_1(t) & \text{if } t\in[T_0,T_1],\\
     s_2(t) & \text{if } t\in[T_1,T_2].
     \end{cases}
 \end{equation}
 Then $(\gamma_2,s)\in\mathcal{B}^{piec}(T_0,T_2,s_0,\overline{\gamma}, \C, \V, f, F, u_D, u^0, u^1).$
 \end{remark}
 
 Using the continuous dependence Theorem \ref{thm:dip-cont-fract+c.i.-VISCO-t0t1} we are in a position to prove a compactness result for $\mathcal{B}^{reg}$, which will be useful for the proof of the main result of the paper (see Theorem \ref{thm:main-dissipat}).
 
 \begin{theorem}\label{thm:compat-Breg}
 Under the assumption of Definition \ref{def:Creg-t0t1}, let $(\gamma_k,\,s_k)\in \mathcal{B}^{reg}(T_0,T_1)$. Then there exists a not relabelled subsequence and there exists $(\gamma,s)\in \mathcal{B}^{reg}(T_0,T_1)$ such that $\gamma_k \to \gamma$ uniformly (in the sense of Definition \ref{def:conv-unif}) and $s_k \to s$ in $C^3([T_0,T_1])$.
 \end{theorem}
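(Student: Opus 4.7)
The plan is to extract convergent subsequences for $(\gamma_k,s_k)$ using the \emph{a priori} bounds built into $\mathcal{G}_{r,L}$ and $\mathcal{S}^{reg}_{\mu,M}(T_0,T_1)$, identify the associated viscoelastic solutions via the continuous dependence Theorem~\ref{thm:dip-cont-fract+c.i.-VISCO-t0t1}, and then pass to the limit in the energy--dissipation balance~\eqref{eq:energy-diss-balance-t0t1}.

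First, I would invoke Lemma~\ref{lemma:compattezza-gamma} to obtain, up to a subsequence, $\gamma_k\to\gamma$ uniformly with $\gamma\in\mathcal{G}_{r,L}$; the equality $\gamma|_{[a_0,s_0]}=\overline\gamma|_{[a_0,s_0]}$ is inherited in the limit. For the crack length functions, Definition~\ref{def:lunghezze} provides uniform $C^0$ bounds on $s_k$, $\dot s_k$, $\ddot s_k$, $\dddot s_k$ together with a uniform Lipschitz bound on $\dddot s_k$. Arzelà--Ascoli, applied successively to these four sequences, yields a further subsequence converging to some $s$ in $C^3([T_0,T_1])$, with $s\in\mathcal{S}^{reg}_{\mu,M}(T_0,T_1)$, $s(T_0)=s_0$, and $s(T_1)=\lim_k s_k(T_1)\leq\lim_k b_{\gamma_k}=b_\gamma$, so $(\gamma,s)$ is an admissible pair.

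Let $u_k$ and $u$ be the unique weak viscoelastic solutions associated with $(\gamma_k,s_k)$ and $(\gamma,s)$ provided by Theorem~\ref{thm:ex-uniq-visco-t0t1}. Since the data $\C,\V,f,F,u_D,u^0,u^1$ are fixed and the convergences of $\gamma_k,s_k$ have just been established, Theorem~\ref{thm:dip-cont-fract+c.i.-VISCO-t0t1} yields, for every $t\in[T_0,T_1]$, the pointwise convergences $u_k(t)\to u(t)$ in $H$, $Du_k(t)\to Du(t)$ in $\underline{H}$, and $\dot u_k(t)\to\dot u(t)$ in $H$, together with a uniform bound $\|u_k(t)\|+\|Du_k(t)\|+\|\dot u_k(t)\|\leq C$ for all $k$ and $t$. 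The pointwise convergences immediately give $\mathcal{E}_{u_k}(t_i)\to\mathcal{E}_u(t_i)$, while $s_k(t_i)\to s(t_i)$ follows from the $C^3$ convergence of $s_k$. For each fixed $t$, the integrand $e^{\tau-t}\V Eu_k(\tau)$ converges to $e^{\tau-t}\V Eu(\tau)$ in $\tilde H$ pointwise in $\tau$ and is dominated by $\|\V\|_\infty C$; dominated convergence then gives $(\mathcal{L}_{T_0}u_k)(t)\to(\mathcal{L}_{T_0}u)(t)$ in $\tilde H$, so the endpoint terms appearing in $\mathcal{D}_{u_k}$ and $\mathcal{W}_{u_k}$ through~\eqref{eq:ViscoDiss} and~\eqref{eq:WorkEqWeak} converge.

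The remaining time integrals in~\eqref{eq:ViscoDiss} and~\eqref{eq:WorkEqWeak} pass to the limit by a second dominated convergence argument, based on the same uniform $L^\infty$-in-$t$ bound on $\|Eu_k(t)\|$ and $\|\dot u_k(t)\|$, together with the integrability of $f$, $\dot F$, $\dot u_D$, and $\ddot u_D$. The main obstacle I expect is precisely this nested $\tau$--$t$ dependence of the memory operator $\mathcal{L}_{T_0}$: Theorem~\ref{thm:dip-cont-fract+c.i.-VISCO-t0t1} supplies only pointwise-in-$t$ convergence of $Eu_k$, so the uniform $L^\infty$ bound it also provides is indispensable in order to control both the inner $\tau$-integral and the outer $t$-integral through two successive applications of dominated convergence. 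Once every term has been passed to the limit, the balance~\eqref{eq:energy-diss-balance-t0t1} holds for $u$, showing $(\gamma,s)\in\mathcal{B}^{reg}(T_0,T_1)$ and completing the proof.
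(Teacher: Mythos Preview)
Your proposal is correct and follows essentially the same approach as the paper: extract subsequences via Lemma~\ref{lemma:compattezza-gamma} and Arzel\`a--Ascoli, apply the continuous dependence Theorem~\ref{thm:dip-cont-fract+c.i.-VISCO-t0t1}, and pass to the limit in the energy--dissipation balance. You are in fact more explicit than the paper about the dominated convergence needed for the memory terms (the paper simply cites the bounds~\eqref{eq:Lu-Linfty-t0t1}--\eqref{eq:dotLu-L2-t0t1} and Theorem~\ref{thm:dip-cont-fract+c.i.-VISCO-t0t1} and asserts that the limit can be taken), and you also check the admissibility conditions $\gamma|_{[a_0,s_0]}=\overline\gamma$ and $s(T_1)\le b_\gamma$ that the paper leaves implicit.
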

 \begin{proof}
 By Lemma \ref{lemma:compattezza-gamma} there exists a subsequence (not relabelled) $\gamma_k$ and $\gamma\in\mathcal{G}_{r,L}$ such that $\gamma_k \to \gamma$ uniformly (in the sense of Definition \ref{def:conv-unif}). By Ascoli-Arzelà Theorem there exists $s\in C^3([T_0,T_1])$ and a further subsequence $s_k$ converging to $s$ in $C^3([T_0,T_1])$. Moreover, if we pass to the limit ad $k\to +\infty$ in the conditions in Definition \ref{def:lunghezze} for $s_k$, we get that $s\in \mathcal{S}^{reg}_{\mu,M}([T_0,T_1])$. We defined $u$ as the solution of the viscoelastic problem \eqref{eq:weak_generalizzata1-t0t1}-\eqref{eq:weak_generalizzata3-t0t1} on the time-dependent cracked domain $t\mapsto \Omega\setminus\Gamma^{\gamma}_{s(t)}$ with $t\in[T_0,T_1]$ and we define $u_k$ as the solution of the viscoelastic problem on the time-dependent cracked domain $t\mapsto \Omega\setminus\Gamma^{\gamma_k}_{s_k(t)}$ with $t\in[T_0,T_1]$. Since $(\gamma_k,s_k)\in\mathcal{B}^{reg}(T_0,T_1)$ we have
 \begin{align}\label{eq:bilancio-k}
    &\frac{1}{2}\|\dot u_k(t_2)\|^2 + \frac{1}{2}((\C+\V)Eu_k(t_2),Eu_k(t_2))-(\mathcal{L}_{T_0}u_k(t_2),Eu_k(t_2)) \nonumber\\
    -&\frac{1}{2}\|\dot u_k(t_1)\|^2 - \frac{1}{2}((\C+\V)Eu_k(t_1),Eu_k(t_1))+(\mathcal{L}_{T_0}u_k(t_1),Eu_k(t_1)) \nonumber\\
    -&\int^{t_2}_{t_1} (\V Eu_k(t),Eu_k(t)) \dt{} - \int^{t_2}_{t_1}(\mathcal{L}_{T_0}u_k(t),Eu_k(t)) \dt{} + s_k(t_2) - s_k(t_1) \nonumber\\
    =& \int^{t_2}_{t_1} \Big( (f(t), \dot u_k(t) - \dot u_D(t)) + ((\C+\V)Eu_k(t),E\dot{u}_D(t) ) - (\mathcal{L}_{T_0}u_k(t),E\dot{u}_D(t) )  \Big)\dt{} \nonumber \\
    -&\int^{t_2}_{t_1}(\dot F(t), Eu_k(t)- Eu_D(t))\dt{} + (F(t_2), Eu_k(t_2)-Eu_D(t_2))-(F(t_1), Eu_k(t_1)-Eu_D(t_1)) \nonumber \\
    -&\int^{t_2}_{t_1} (\dot u_k(t), \ddot u_D(t)) \dt{} + (\dot u_k(t_2), \dot u_D(t_2)) - (\dot u_k(t_1), \dot u_D(t_1)),
\end{align}
for every interval $[t_1,t_2]\subset [T_0,T_1]$. Using Theorem \ref{thm:dip-cont-fract+c.i.-VISCO-t0t1} and the bounds \eqref{eq:Lu-Linfty-t0t1}-\eqref{eq:dotLu-L2-t0t1}, we can pass to the limit as $k \to +\infty$ in \eqref{eq:bilancio-k} and we get the energy-dissipation balance \eqref{eq:energy-diss-balance-t0t1} for $u$. This proves that $(\gamma,s)\in\mathcal{B}^{reg}(T_0,T_1)$ and concludes the proof.
\end{proof}

\section{Existence for the coupled problem}\label{sect:existence-coupled}

In this section we prove an existence result for the crack evolution (described by the functions $\gamma$ and $s$). In order to do this we define a maximal dissipation condition (see also \cite{Dalmaso-Larsen-Toader2016} and \cite{Dalmaso-Larsen-Toader}), which forces the crack tip to choose a path which allows for a maximal speed.

\begin{definition}\label{def:maximal-dissip}
Assume \eqref{eq:C-V}, \eqref{eq:forzante-f-t0t1}-\eqref{eq:condizione-nulla-dentro} and \eqref{eq:controllo-velocità}. Let $ u^{0} \in V_{0}$, such that $u^{0}-u_D(0)\in V^D_{0}$, and let $u^{1}\in H$. Given $\eta>0$ we say that $(\gamma,s)\in \mathcal{B}^{piec}(0,T)$ satisfies the $\eta-$maximal dissipation condition on $[0,T]$ if there exists no $(\hat \gamma, \hat s)\in \mathcal{B}^{piec}(0,\tau_1)$, for some $\tau_1\in (0,T]$, such that
\begin{itemize}
\item[(M1)] $sing(\hat{s}) \subset sing({s})$,
\item[(M2)] $\hat{s}(t)=s(t)$ and $\hat{\gamma}(\hat{s}(t))={\gamma}({s}(t))$ for every $t\in [0,\tau_0]$, for some $\tau\in [0,\tau_1)$,
\item[(M3)] $\hat{s}(t)>s(t)$ for every $t\in (\tau_0,\tau_1]$ and $\hat{s}(\tau_1)>s(\tau_1) + \eta$.
\end{itemize}
\end{definition}

\begin{remark}
 We refer to the discussion in \cite[Section 1]{Dalmaso-Larsen-Toader} for some comments on the presence of the parameter $\eta >0$.
\end{remark}

We are now in position to prove the main result of the paper. The proof follows the lines of \cite{Dalmaso-Larsen-Toader2016} and \cite{Dalmaso-Larsen-Toader}, devoted to the case of elastodynamics without viscosity terms.

\begin{theorem}\label{thm:main-dissipat}
Under the assumption of Definition \ref{def:maximal-dissip}, for every $\eta >0$ there exists a pair $(\gamma, s) \in \mathcal{B}^{piec}(0,T)$ satisfying the $\eta$-maximal dissipation condition on $[0,T]$.
\end{theorem}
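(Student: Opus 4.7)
The plan is to apply Zorn's lemma to $\mathcal{B}^{piec}(0,T)$, equipped with the preorder $\prec$ defined by $(\gamma_1,s_1) \prec (\gamma_2,s_2)$ iff there exist $0 \leq \tau_0 < \tau_1 \leq T$ such that $(\gamma_2,s_2)$ plays the role of $(\hat\gamma,\hat s)$ in conditions (M1)--(M3) of Definition \ref{def:maximal-dissip} relative to $(\gamma_1,s_1)$. Non-emptiness of $\mathcal{B}^{piec}(0,T)$ follows from Proposition \ref{prop:frattura-fissa}: the trivial pair $(\overline\gamma, s \equiv s_0)$ lies in $\mathcal{B}^{reg}(0,T) \subseteq \mathcal{B}^{piec}(0,T)$. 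The bridge between $\prec$-maximality in $\mathcal{B}^{piec}(0,T)$ and the $\eta$-maximal dissipation condition (whose alternatives live in $\mathcal{B}^{piec}(0,\tau_1)$) is the following extension lemma: given any $(\hat\gamma, \hat s) \in \mathcal{B}^{piec}(0,\tau_1)$, let $u$ denote the associated viscoelastic solution on $[0,\tau_1]$, which by Proposition \ref{rem:reg-viscoelastic} lies in $C^0([0,\tau_1],V) \cap C^1([0,\tau_1],H)$; applying Proposition \ref{prop:frattura-fissa} on $[\tau_1,T]$ with initial data $(u(\tau_1), \dot u(\tau_1))$ and constant length $\hat s(\tau_1)$, and concatenating via Remark \ref{Remark:concatenation}, yields an extension of $(\hat\gamma,\hat s)$ to $\mathcal{B}^{piec}(0,T)$. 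This makes $\prec$-maximality and $\eta$-maximal dissipation equivalent.

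The core of the argument is verifying the chain hypothesis of Zorn's lemma. Given a chain $\{(\gamma_\alpha, s_\alpha)\}_{\alpha \in A}$, I would exploit the role of $\eta$: each strict $\prec$-step involves a jump of more than $\eta$ at some $\tau_1 \in (0,T]$, while Definition \ref{def:lunghezze} forces $s_\alpha(T) \leq s_0 + \mu T$ uniformly in $\alpha$. A cofinality argument (inspecting the traces of $s_\alpha$ on a countable dense subset of $[0,T]$, whose values can undergo only finitely many $\eta$-jumps) reduces the problem to producing an upper bound for a cofinal sequence $(\gamma_k, s_k)$. Via Lemma \ref{lemma:compattezza-gamma}, I would extract a subsequence with $\gamma_k \to \gamma^*$ uniformly, and via Ascoli--Arzel\`a applied with the piecewise $C^{3,1}$ bounds of Definition \ref{def:lunghezze}, a further subsequence with $s_k \to s^*$ on each regularity subinterval, keeping $sing(s^*)$ finite by monotonicity of the singular sets along the chain together with the $\eta$-bound. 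Theorem \ref{thm:compat-Breg}, applied on each regularity subinterval and reassembled via Remark \ref{Remark:concatenation}, gives $(\gamma^*, s^*) \in \mathcal{B}^{piec}(0,T)$, and one verifies directly that $(\gamma_\alpha, s_\alpha) \prec (\gamma^*, s^*)$ for every $\alpha$ whose index is strictly below the cofinal sequence.

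Zorn's lemma then produces a $\prec$-maximal $(\gamma, s) \in \mathcal{B}^{piec}(0,T)$, which by the equivalence established in the first paragraph satisfies the $\eta$-maximal dissipation condition on $[0,T]$. The main obstacle is the chain-upper-bound step: coordinating compactness of crack paths (Lemma \ref{lemma:compattezza-gamma}), of piecewise length functions (Ascoli--Arzel\`a on the regularity pieces), and of the energy-dissipation balance (Theorem \ref{thm:compat-Breg}), while simultaneously controlling the singular sets via the $\eta$-jump bound and confirming that the limit $\prec$-dominates every element of the chain. This is precisely the scenario for which the continuous-dependence Theorem \ref{thm:dip-cont-fract+c.i.-VISCO-t0t1} and its consequence Theorem \ref{thm:compat-Breg} were set up in the previous section.
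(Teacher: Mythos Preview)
Your Zorn's-lemma approach is genuinely different from the paper's, which proceeds by a direct iterative construction: one fixes a subdivision $0=T_0<\dots<T_k=T$ with mesh $<\eta/\mu$, and on each $[T_{i-1},T_i]$ selects $(\gamma_i,s_i)$ maximising $\int_{T_{i-1}}^{T_i} s\,\dt$ over an admissible class (existence of the maximiser uses the compactness Theorem~\ref{thm:compat-Breg}). The key point is then elementary: any competitor $(\hat\gamma,\hat s)$ violating $\eta$-maximality must satisfy $\tau_1-\tau_0>\eta/\mu$, hence the interval $(\tau_0,\tau_1]$ straddles some node $T_j$, and $\hat s>s$ on $(\tau_0,T_j]$ contradicts the maximisation performed at step $j$.

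Your proposal has a real gap at the chain-upper-bound step. Suppose $(\gamma_k,s_k)$ is a strictly $\prec$-increasing cofinal sequence and $(\gamma^*,s^*)$ is a subsequential limit produced by compactness. You assert that $(\gamma^*,s^*)$ $\prec$-dominates the chain, but this is not justified. The relation $\prec$ does \emph{not} imply pointwise monotonicity of the $s_k$: from $(\gamma_k,s_k)\prec(\gamma_{k+1},s_{k+1})$ one only gets $s_{k+1}>s_k$ on some interval $(\tau_0^{(k)},\tau_1^{(k)}]$, with no control outside it, so for a fixed $t$ the sequence $s_j(t)$ need not be increasing and $s^*(t)\ge s_k(t)$ can fail. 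Even where one can pass to the limit, condition (M3) requires \emph{strict} inequalities $s^*>s_k$ on an interval and $s^*(\tau_1)>s_k(\tau_1)+\eta$, and these do not survive uniform limits (you recover only $\ge$). Your sentence ``one verifies directly that $(\gamma_\alpha,s_\alpha)\prec(\gamma^*,s^*)$ for every $\alpha$ whose index is strictly below the cofinal sequence'' is exactly the unproven step; and for Zorn you must dominate \emph{all} chain elements, including the cofinal sequence itself. The ``finitely many $\eta$-jumps at each rational'' heuristic does not help here, because the $\eta$-jump in (M3) occurs at a $\tau_1$ that varies with the step and is not pinned to any fixed point.

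There is a secondary issue you should also check: because the endpoints of the domain always lie in $sing(\cdot)$ by Definition~\ref{def:lunghezze}, condition (M1) forces $\tau_1\in sing(s_1)$. This makes the transitivity of $\prec$ on $\mathcal{B}^{piec}(0,T)$ delicate (the singular sets of $s_2$ on $(\tau_1,T]$ are unconstrained by $(\gamma_1,s_1)\prec(\gamma_2,s_2)$), and you have not verified it. Without transitivity Zorn's lemma is not available. The paper's finite-grid construction sidesteps both difficulties entirely.
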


\begin{proof}
 Let us fix $\eta>0$ and a finite subdivision $0=T_0<T_1<...<T_k=T$ of the time interval $[0,T]$ such that $T_i - T_{i-1} < \frac{\eta}{\mu}$ for every $i\in\{0,1,2,...,k\}$. We will define the solution usong a recursive procedure on each subinterval $[T_{i-1}, T_i]$, for every $i\in\{0,1,2,...,k\}$. In order to define this procedure, we set
 \begin{equation}\label{eq:A1}
     \mathcal{X}_1:=\Big\{ (\gamma,s) \in \mathcal{B}^{piec}(0,T_1,0,\gamma_0,\C,\V,f,F,u_D,u^0,u^1)\,|\, s \in \mathcal{S}^{reg}_{\mu,M}(0,T_1),\, s(0)=0 \Big\},
 \end{equation}
 where $\gamma_0$ is the function that appears in Definition \ref{def:spazioGrL}. By Proposition \ref{prop:frattura-fissa} we have that $(\gamma_0,0)\in \mathcal{X}_1$ and in particular we have $\mathcal{X}_1\neq \emptyset$.  Moreover, we choose $(\gamma_1,s_1)\in \mathcal{X}_1$ such that
 \begin{equation*}
     \int^{T_1}_{T_0} s_1(t)\, \dt{}= \max_{(\gamma,s)\in \mathcal{X}_1}\int^{T_1}_{T_0} s(t)\, \dt{},
\end{equation*}
where the existence of $(\gamma_1, s_1)$ is guaranteed by Lemma \ref{lemma:esistenza-max} below. If $k=1$, we define $(\gamma,s):=(\gamma_1,s_1)$ and we have to prove that this couple satisfies the $\eta$-maximal dissipation condition. Otherwise, we fix $i\in \{2,...,k\}$ and we set
 \begin{align}\label{eq:Ai}
     \mathcal{X}_i:=\Big\{ (\gamma,s) \in \mathcal{B}^{piec}(0,T_i,0,\gamma_0,\C,\V,f,F,u_D,u^0,u^1)\,|\, s|_{[T_{i-1},T_{i}]} \in \mathcal{S}^{reg}_{\mu,M}(T_{i-1},T_{i}),&\nonumber\\ s(t)=s_{i-1}(t), \gamma(s(t))=\gamma_{i-1}(s_{i-1}(t)) \,\forall
     \,t\in [0,T_{i-1}]\Big\}.&
 \end{align}
 We note that $\mathcal{X}_i\neq \emptyset$. Indeed, if we define $\tilde{s}_{i-1}$ as
 \begin{equation*}
     \tilde{s}_{i-1}(t):=\begin{cases}
      s_{i-1}(t) & \textnormal{for } t\in [0,T_{i-1}],\\
      s_{i-1}(T_{i-1}) & \textnormal{for } t\in [T_{i-1}, T_i],
     \end{cases}
 \end{equation*}
 we can apply Proposition \ref{prop:frattura-fissa} and Remark \ref{Remark:concatenation} to obtain $(\gamma_{i-1},\tilde{s}_{i-1})\in \mathcal{X}_i$.
 Assume that the pair $(\gamma_{i-1},s_{i-1})\in \mathcal{X}_{i-1}$ has already been defined, then we choose $(\gamma_i,s_i)\in \mathcal{X}_i$ such that
 \begin{equation}\label{eq:maximal-cond}
     \int^{T_i}_{T_{i-1}} s_i(t)\, \dt{}= \max_{(\gamma,s)\in \mathcal{X}_i}\int^{T_i}_{T_{i-1}} s(t)\, \dt{},
\end{equation}
where the existence of $(\gamma_i, s_i)$ is guaranteed by Lemma \ref{lemma:esistenza-max} below.

We now define $(\gamma,s):=(\gamma_k,s_k)$, where $(\gamma_k,s_k)$ is the the pair defined in the final step of the procedure defined above. It remains to prove that $(\gamma,s)$ satisfies the $\eta$-maximal dissipation condition on the interval $[0,T]$. Assume, by contradiction that there exist $0 \leq \tau_0 <  \tau_1 \leq T$ and $(\hat\gamma,\hat s) \in \mathcal{B}^{piec}(0,\tau_1)$ such that:
\begin{itemize}
    \item[(i)] $sing(\hat s) \subset sing(s) \subset \{ T_1,..., T_{k-1} \}$
    \item[(ii)] $s(t)=\hat s(t)$ and $\gamma(s(t))=\hat\gamma(\hat s(t))$ for every $t \in [0,\tau_0]$,
    \item[(iii)] $s(t) < \hat s(t)$ for every $t\in (\tau_0,\tau_1]$ and $\hat s (\tau_1) > s(\tau_1) + \eta$.
\end{itemize}
Since $\tau_0 < T$, there exists an index $j\in \{1,...,k\}$ such that $\tau_0\in [T_{j-1},T_j)$. We claim that $\tau_1 > T_j$. Indeed, the using the monotonicity of $s$ and the points (ii) and (iii), we have that $\hat s (\tau_1) > s(\tau_1) + \eta \geq s(\tau_0) + \eta = \hat s(\tau_0) + \eta $ and in particular $ \hat s(\tau_1) - \hat s(\tau_0) > \eta $. On the other hand, since $\hat s \in \mathcal{S}^{piec}_{\mu,M}(0,\tau_1)$ we have $\hat s(\tau_1)- \hat s(\tau_0) \leq \mu (\tau_1 - \tau_0)$, which together with the previous inequality give us $\tau_1 - \tau_0 > \eta/\mu$. Since the subdivision of the interval was choosen such that $T_{i-1}- T_i < \eta/\mu$ for every $i\in \{1,...,k\}$, we get that $\tau_1>T_j$.

Using (i) we have that $\hat s|_{[T_{j-1},T_j]} \in \mathcal{S}^{reg}_{\mu,M}(T_{j-1},T_j)$ and taking (ii) into account we get that $(\hat \gamma, \hat s) \in \mathcal{X}_j$. By construction $s=s_j$ on $[T_{j-1},T_j]$, where $s_j$ is the function defined in \eqref{eq:maximal-cond} for $i=j$. As a consequence of (iii) we get $\hat s (t) > s(t) = s_j (t)$ for every $t\in (\tau_0, T_j]$, which contradicts \eqref{eq:maximal-cond}.

\end{proof}

We close this section with the following Lemma used to prove Theorem \ref{thm:main-dissipat}. The proof can be found in \cite[Lemma 5.3]{Dalmaso-Larsen-Toader} with obvious modifications.

\begin{lemma}\label{lemma:esistenza-max}
 For every $i=1,..., k$ there exists $(\gamma_i,s_i) \in \mathcal{X}_i$ such that
 \begin{equation}\label{eq:SupS-tesi}
     \int^{T_i}_{T_{i-1}} s_i(t)\, \dt{}= \max_{(\gamma,s)\in \mathcal{X}_i}\int^{T_i}_{T_{i-1}} s(t)\, \dt{},
\end{equation}
where $\mathcal{X}_i$ is the space defined in \eqref{eq:A1} and \eqref{eq:Ai}.
\end{lemma}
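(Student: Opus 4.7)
The plan is to apply the direct method of the calculus of variations on $\mathcal{X}_i$ to the functional
\begin{equation*}
  J(\gamma,s) := \int_{T_{i-1}}^{T_i} s(t)\,dt.
\end{equation*}
First I note that $J$ is bounded on $\mathcal{X}_i$: every $s \in \mathcal{X}_i$ is nondecreasing with $\dot s \le \mu$, so $s(t) \le s_{i-1}(T_{i-1}) + \mu(T_i - T_{i-1})$, and $\mathcal{X}_i\neq\emptyset$ was already verified inside the proof of Theorem \ref{thm:main-dissipat}. I then fix a maximizing sequence $(\gamma_n,s_n) \in \mathcal{X}_i$.

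For compactness, I invoke Lemma \ref{lemma:compattezza-gamma} to extract (not relabelled) a subsequence with $\gamma_n \to \gamma^*$ uniformly for some $\gamma^* \in \mathcal{G}_{r,L}$. The restrictions $s_n|_{[T_{i-1},T_i]} \in \mathcal{S}^{reg}_{\mu,M}(T_{i-1},T_i)$ are equibounded in $C^{3,1}$, so Ascoli-Arzel\`a gives a further subsequence converging in $C^3([T_{i-1},T_i])$ to a function lying in $\mathcal{S}^{reg}_{\mu,M}(T_{i-1},T_i)$ (all the conditions of Definition \ref{def:lunghezze} pass to the uniform limit). Since all $s_n$ coincide with $s_{i-1}$ on $[0,T_{i-1}]$, extending the limit by $s_{i-1}$ there produces $s^* \in \mathcal{S}^{piec}_{\mu,M}(0,T_i)$ with $sing(s^*) \subset sing(s_{i-1})$. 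The identities $s^*(t) = s_{i-1}(t)$ and $\gamma^*(s^*(t)) = \gamma_{i-1}(s_{i-1}(t))$ on $[0,T_{i-1}]$, together with $s^*(T_i) \le b_{\gamma^*}$, all pass to the limit by continuity and the fact that $b_{\gamma_n} \to b_{\gamma^*}$.

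The heart of the argument is verifying that $(\gamma^*, s^*) \in \mathcal{X}_i$, which reduces to showing that the energy-dissipation balance holds on $[0,T_i]$. On $[0,T_{i-1}]$ this is automatic, since the associated displacement agrees with the fixed one coming from $(\gamma_{i-1},s_{i-1})$. On $[T_{i-1},T_i]$ the initial data at $T_{i-1}$ (displacement and velocity) are inherited from that fixed solution and hence do not depend on $n$. Therefore the restrictions $(\gamma_n, s_n|_{[T_{i-1},T_i]})$ form a sequence in the single class $\mathcal{B}^{reg}(T_{i-1},T_i, s_{i-1}(T_{i-1}), \gamma_{i-1}, \C, \V, f, F, u_D, u(T_{i-1}), \dot u(T_{i-1}))$, and the compactness result Theorem \ref{thm:compat-Breg} yields that the limit pair still belongs to this class. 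Concatenation via Remark \ref{Remark:concatenation} then gives $(\gamma^*, s^*) \in \mathcal{B}^{piec}(0,T_i, \ldots)$, so $(\gamma^*, s^*) \in \mathcal{X}_i$.

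Finally, the uniform convergence $s_n \to s^*$ on $[T_{i-1},T_i]$ immediately gives $J(\gamma^*, s^*) = \lim_n J(\gamma_n, s_n) = \sup_{\mathcal{X}_i} J$, establishing \eqref{eq:SupS-tesi}. The step I expect to be most delicate is the passage to the limit in the energy-dissipation balance, since the balance is nonlinear in the displacements $u_n$ and their time derivatives; however, this is precisely what Theorem \ref{thm:compat-Breg} packages, and that theorem in turn rests on the continuous dependence result Theorem \ref{thm:dip-cont-fract+c.i.-VISCO-t0t1}. A secondary technical point is that the parameter interval $[a_0, b_{\gamma_n}]$ varies with $n$, but this is handled by the notion of uniform convergence in Definition \ref{def:conv-unif} combined with the bound $s_n(T_i) \le b_{\gamma_n}$.
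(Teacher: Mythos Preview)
Your proof is correct and is precisely the approach the paper invokes by citing \cite[Lemma~5.3]{Dalmaso-Larsen-Toader}: the direct method on a maximizing sequence, compactness of the curves via Lemma~\ref{lemma:compattezza-gamma} and of the crack lengths via Ascoli--Arzel\`a, closure of the energy-dissipation balance through Theorem~\ref{thm:compat-Breg}, and concatenation via Remark~\ref{Remark:concatenation}. The only cosmetic slip is that one should write $sing(s^*)\subset sing(s_{i-1})\cup\{T_{i-1}\}$, which is harmless since membership in $\mathcal{X}_i$ only requires $s^*|_{[T_{i-1},T_i]}\in\mathcal{S}^{reg}_{\mu,M}(T_{i-1},T_i)$.
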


\vspace{1 cm}

\noindent \textsc{Acknowledgements.}
The author wishes to thank Professor Gianni Dal Maso for
having proposed the problem and for many helpful discussions on the topic. This paper is based on work supported by the National Research Project (PRIN  2017) 
``Variational Methods for Stationary and Evolution Problems with Singularities and 
 Interfaces", funded by the Italian Ministry of University and Research. 
The author is member of the {\em Gruppo Nazionale per l'Analisi Ma\-te\-ma\-ti\-ca, la Probabilit\`a e le loro Applicazioni} (GNAMPA) of the {\em Istituto Nazionale di Alta Matematica} (INdAM).

\vspace{1cm}

{\frenchspacing
\begin{thebibliography}{99}

\bibitem{Boltz_1}  L. Boltzmann: {\it Zur Theorie der elastischen Nachwirkung}, Sitzber. Kaiserl.
Akad. Wiss. Wien, Math.-Naturw. Kl. {\bf 70}, Sect. II (1874), 275-300.

\bibitem{Boltz_2} L. Boltzmann, {\it Zur Theorie der elastischen Nachwirkung}, Ann. Phys. u. Chem., {\bf 5} (1878), 430-432.

\bibitem{Caponi} M. Caponi: {\it Linear Hyperbolic Systems in Domains with Growing Cracks}, Milan J. Math. {\bf 85} (2017), 149-185. 

\bibitem{Caponi-tesi} M. Caponi: {\it On some mathematical problems in fracture dynamics}, Ph.D. Thesis SISSA, Trieste, 2019.

\bibitem{Cianci-Dalmaso} F. Cianci, G. Dal Maso: {\it Uniqueness and continuous dependence for a viscoelastic problem with memory in domains with time dependent cracks}, accepted paper at Differential Integral Equations, 2021.

\bibitem{Dafermos} C. Dafermos: {\it Asymptotic stability in viscoelasticity}, Arch. Rational Mech. Anal. {\bf 37} (1970), 297–308.

\bibitem{Dalmaso-Larsen} G. Dal Maso, C.J. Larsen: { \it Existence for wave equations on domains with arbitrary growing cracks}. Atti Accad. Naz. Lincei Rend. Lincei Mat. Appl. {\bf 22} (2011), no. 3, 387–408.

\bibitem{Dalmaso-Larsen-Toader2016} G. Dal Maso, C.J. Larsen, R. Toader: {\it Existence for constrained dynamic Griffith fracture with a weak maximal dissipation condition}, J. Mech. Phys. Solids {\bf 95} (2016), 697–707.

\bibitem{Dalmaso-Larsen-Toader} G. Dal Maso, C.J. Larsen, R. Toader: {\it Existence for elastodynamic Griffith fracture with a weak maximal dissipation condition}, J. Math. Pures Appl. (9) {\bf 127} (2019), 160–191.

\bibitem{DalMaso-Luc}  G. Dal Maso, I. Lucardesi: {\it The wave equation on domains with cracks growing
on a prescribed path: existence, uniqueness, and continuous dependence on the data}, Appl. Math. Res. Express 2017 (2017), 184–241.

\bibitem{Dalmaso-Sapio} G. Dal Maso, F. Sapio: {\it Quasistatic limit of a dynamic viscoelastic model with memory}, Milan J. Math. {\bf 89} (2021), no. 2, 485–522.

\bibitem{Dalmaso-Scala} G. Dal Maso, R. Scala: {\it Quasistatic evolution in perfect plasticity as limit of dynamic processes}, J. Dynam. Differential Equations 26 (2014), no. 4, 915–954.

\bibitem{DalMaso-Toader} G. Dal Maso, R. Toader: {\it On the Cauchy problem for the wave equation on time-dependent domains}, J. Differential Equations {\bf 266} (2019), 3209-3246.

\bibitem{DL_V1} R. Dautray, J.-L. Lions: {\it Mathematical analysis and numerical methods for science and technology. Vol. 1. Physical origins and classical methods}. With the collaboration of Philippe Bénilan, Michel Cessenat, André Gervat, Alain Kavenoky and Hélène Lanchon. Translated from the French by Ian N. Sneddon. With a preface by Jean Teillac. Springer-Verlag, Berlin, 1990.

\bibitem{Dautray-Lions_V5} R. Dautray, J.-L. Lions: { \it Mathematical analysis and numerical methods for science and technology. Vol. 5. Evolution problems I}, With the collaboration of Michel Artola, Michel Cessenat and Hélène Lanchon. Translated from the French by Alan Craig. Springer-Verlag, Berlin, 1992.

\bibitem{D-L_V8} R. Dautray, J.-L. Lions: { \it Jacques-Louis Analyse mathématique et calcul numérique pour les sciences et les techniques. Vol. 8.} (French) [Mathematical analysis and computing for science and technology. Vol. 8] Évolution: semi-groupe, variationnel. [Evolution: semigroups, variational methods] Reprint of the 1985 edition. INSTN: Collection Enseignement. [INSTN: Teaching Collection] Masson, Paris, 1988.

\bibitem{Fab-Gi-Pata} M. Fabrizio, C. Giorgi, V. Pata: {\it A New Approach to Equations with Memory}, Arch. Rational Mech. Anal. 198 (2010), 189-232.

\bibitem{Griffith} A. Griffith: {\it The phenomena of rupture and flow in solids}, Philos. Trans. Roy. Soc.
London Ser. A {\bf 221} (1920) 163-198.

\bibitem{Fab-Morro} M. Fabrizio, A. Morro: {\it Mathematical problems in linear viscoelasticity}. SIAM Studies in Applied Mathematics, 12. Society for Industrial and Applied Mathematics (SIAM), Philadelphia, PA, 1992.

\bibitem{Larsen} C. Larsen: {\it Models for dynamic fracture based on Griffiths criterion}, K. Hackl (Ed.), IUTAM Symp. Variational Concepts with Applications to the Mechanics of Materials, Springer, 2010, pp. 131-140.

\bibitem{Mott} N.F. Mott: {\it Brittle fracture in mild steel plates}, Engineering {\bf 165}, 16–18 (1948)

\bibitem{Ol-Sha-Yos} O.A. Oleinik, A.S. Shamaev, and G.A. Yosifian: {\it Mathematical problems in elasticity and homogenization}, Studies in
Mathematics and its Applications, {\bf26}. North-Holland Publishing Co., Amsterdam, 1992

\bibitem{Sapio} F. Sapio: {\it A dynamic model for viscoelasticity in domains with time dependent cracks}, NoDEA Nonlinear Differential Equations Appl. {\bf 28} (2021), no. 6, Paper No. 67, 47 pp.

\bibitem{Slepyan} L.I. Slepyan: { \it Models and phenomena in fracture mechanics}, Foundations of Engineering Mechanics. Springer-Verlag, Berlin, 2002.

\bibitem{Tasso} E. Tasso, {\it Weak formulation of elastodynamics in domains with growing cracks}, Ann. Mat. Pura Appl. (4) {\bf 199} (2020), 1571–1595.

\bibitem{Volterra_1} V. Volterra: {\it Sur les equations integro-differentielles et leurs applications},
Acta Mathem. {\bf 35} (1912), 295-356.

\bibitem{Volterra_2}  V. Volterra: {\it Le\c cons sur les fonctions de lignes}, Gauthier-Villars, Paris, 1913.

\end {thebibliography}
}

\end{document}